\documentclass[11pt,a4paper]{amsart}

\usepackage[
  bookmarks=true,
  colorlinks=true,
  linkcolor=blue,
  urlcolor=blue,
  citecolor=blue,
  hyperindex=true
]{hyperref}
\usepackage{amsmath}
\numberwithin{equation}{section}
\usepackage{amsthm}
\usepackage{amssymb}
\usepackage{cite}
\usepackage{bm}
\usepackage{enumitem}
\usepackage{geometry}
\usepackage{indentfirst}
\usepackage{graphicx}
\usepackage{xcolor}
\usepackage{marginnote}
\newtheorem{theorem}{Theorem}[section]
\newtheorem{proposition}[theorem]{Proposition}
\newtheorem{example}[theorem]{Example}
\newtheorem{corollary}[theorem]{Corollary}
\newtheorem{lemma}[theorem]{Lemma}

\newtheorem{remark}[theorem]{Remark}
\newtheorem{definition}[theorem]{Definition}

\newcommand{\R}{\mathbb{R}}
\newcommand{\Z}{\mathbb{Z}}

\newcommand{\one}{\mathbf{1}}

\newcommand{\cites}[1]{\cite{#1}}

\title[]{Primitive Periods in Greedy Chip-Firing Games}
\author{Zheng Huang}
\address{School of Mathematical Sciences, CMA-Shanghai, Shanghai Jiao Tong University}
\email{hzxs280457@sjtu.edu.cn}
\author{Anyuan Tian}
\address{Laboratory of Combinatorial and Geometric Structures, Moscow Institute of Physics and Technology}
\email{tyan.a@mipt.ru}

\begin{document}

\begin{abstract}
For a finite irreducible rational transition matrix $P=(P_{ij})$ on $n$ states, the hunger game introduced by Li and Propp is a greedy chip-firing process in which each state $i$ carries a real-valued hunger $h_i$. At each step, a state $i$ of maximal hunger is selected, with ties resolved by a fixed order on the states. Firing $i$ subtracts one from its hunger and then adds $P_{ij}$ to the hunger of each state $j$. Let $\pi$ be the stationary distribution of $P$, and let $T$ be the least positive integer such that $T\pi$ is integral.
We prove that every periodic orbit has least period $T$, with state $i$ firing $T\pi_i$ times, and that the set of periodic states of total
hunger zero tiles the corresponding hyperplane by lattice translations. These results settle two conjectures of Li and Propp. We also prove an analogous least-period theorem for nontrivial periodic orbits of fixed-priority chip-firing on finite strongly connected digraphs, another greedy chip-firing model. We then find a connection between the hunger game and the
chairman assignment problem: the greedy rule that always chooses
the state furthest below its proportional target is precisely
a rank-one hunger game. Motivated by this application and related allocation rules,
we generalize the hunger game by assigning each state a score
that is a nondecreasing function of its hunger and firing a state
of maximal score each time.
For these monotone-score hunger games, we prove that every
periodic orbit still has least period $T$ and give a sufficient
condition for eventual periodicity from every initial state.
\end{abstract}

\maketitle

\section{Introduction}
\label{sec:introduction}

A finite Markov chain is specified by local transition probabilities, whereas its long-term behavior is governed by global objects such as stationary distributions, occupation measures, and hitting probabilities. Random trajectories provide one way to access these objects; for example, Markov chain Monte Carlo uses them for sampling, approximate counting, and integration \cites{JerrumSinclair1996,Diaconis2009}. A complementary combinatorial question asks what remains of the global behavior when the random choices are replaced by reproducible local updates.

Several deterministic analogues of random processes have been developed from
this viewpoint. Engel's probabilistic abacus converts rational transition data into deterministic integral redistributions \cites{Engel1975,Engel1976}. Rotor-routing replaces each random choice of an outgoing edge by a fixed cyclic schedule \cite{HolroydPropp}; functional routers extend this mechanism to general
finite-state Markov chains, including chains with irrational transition
probabilities \cite{ShiragaEtAl2018}. These models also belong to the combinatorial theory of discrete dynamical systems. They are closely connected with chip-firing, in which a firing redistributes chips along the outgoing edges of a graph, and with the more general framework of abelian networks \cites{HolroydLevineMeszarosPeresProppWilson,CorryPerkinson, BondLevine2016}. 
Chip-firing has connections with many areas of mathematics,
including algebraic graph theory, spanning trees, statistical physics,
potential theory on graphs, and graph-theoretic divisor theory; see
\cite{CorryPerkinson} for a systematic introduction.

Li and Propp~\cite{LiPropp} introduced the \emph{hunger game} as a ``greedy'' chip-firing model associated with a Markov chain. Let $P$ be a row-stochastic matrix on $V=\{1,\ldots,n\}$, fix a total order $\prec$ on $V$, and put $H=P-I$. For $i\in V$, write $P_i$ and $H_i$ for the $i$th rows of $P$ and $H$, respectively,  and let $e_i\in\R^V$ be the row vector whose $i$th coordinate is $1$ and whose other coordinates are $0$. A hunger state is a row vector $h=(h_1,\ldots,h_n)\in\R^n$. If $i$ is the $\prec$-least state whose coordinate $h_i$ is maximal, then the hunger map replaces $h$ by $h+P_i-e_i=h+H_i$. Following Li and Propp's chip interpretation of the dynamics \cite[Section~2]{LiPropp}, we call this update a \emph{firing to} $i$. Denote the resulting map by $F$. The total order resolves ties and makes the dynamics deterministic. Since every row of $H$ has coordinate sum zero, total hunger is conserved; after translating by a multiple of the all-ones vector, one may work in $Z\doteq\{h\in\R^n:\sum_i h_i=0\}$. For an initial hunger state $h$, its \emph{forward orbit} is the
sequence $(F^t(h))_{t\ge0}$.  Let $i_t(h)$ be the state fired at
$F^t(h)$.  The \emph{firing vector} of the first $t$ steps is the row
vector $v_h(t)\in\Z_{\ge0}^n$ whose $j$th coordinate is the number of
indices $s\in\{0,\ldots,t-1\}$ for which $i_s(h)=j$.  A state $h$ is
\emph{recurrent} if $F^N(h)=h$ for some positive integer $N$.  The
least such $N$ is the \emph{least period} of $h$, and
$\{h,F(h),\ldots,F^{N-1}(h)\}$ is its \emph{recurrent orbit}.  The
forward orbit of $h$ is \emph{eventually periodic} if it eventually
enters a recurrent orbit, meaning that $F^{t_0}(h)$ is recurrent for
some integer $t_0\ge0$.

Assume that $P$ is irreducible, and let $\pi$ be its stationary
distribution.  Li and Propp proved that after $t$ steps the proportion
of firings to each state differs from the corresponding coordinate of
$\pi$ by $O(t^{-1})$ \cite[Theorem~5.1]{LiPropp}.  They also proved
that when $P$ is rational, every forward orbit is bounded and
eventually periodic \cite[Lemma~3.2 and Remark~3.3]{LiPropp}. Suppose $P$ is rational; let
$T$ be the least positive integer such that $T\pi$ is integral,  and set $p=T\pi$.
They showed that the all-zero hunger state has least period $T$ and
conjectured that every recurrent state has the same least period
\cite[Proposition~7.3 and Conjecture~7.4]{LiPropp}.

Our first main result proves the conjecture of Li and Propp by showing that every
recurrent state has least period $T$, and its firing vector over one period is $p$ (Theorem~\ref{thm:conj74}).  If $h$ is recurrent with least period $N$, then the return identity $v_h(N)H=0$ and the irreducibility of $P$ immediately imply that
$v_h(N)=qp$ and $N=qT$ for some positive integer $q$; see also
\cite[Remark~3.3]{LiPropp}.  Thus $T$ is the smallest period
compatible with stationarity and integrality, and our theorem shows
that the greedy firing rule always attains this theoretical minimum.
The proof reduces to excluding $q>1$.  For a hypothetical orbit of
period $qT$, we introduce a \emph{quota-return map} on the finite set
of orbit points at which the maximum hunger attains its minimum. With this map in
place, the nonprimitive case $q>1$ is ruled out by a short argument.

The companion result concerns the geometry of the recurrent states.
Let $B$ be the set of recurrent states of the hunger map in $Z$;
following Li and Propp, we call $B$ the \emph{basin of attraction}
\cite[Section~7]{LiPropp}.  Let
$L=\{zH:z\in\Z^n,\ \sum_i z_i=0\}$, the lattice generated by the
differences $H_i-H_j$ of the firing increments.  Li and Propp proved
that $B\cap(h+L)\ne\varnothing$ for every $h\in Z$
\cite[Proposition~7.7]{LiPropp}, and conjectured that this intersection
always consists of exactly one point
\cite[Conjecture~7.5]{LiPropp}.  We prove this uniqueness statement in
Theorem~\ref{thm:conj75}, using a two-orbit analogue of the
quota-return map used to prove Theorem~\ref{thm:conj74}.  Equivalently, $
 Z=\bigsqcup_{u\in L}(B+u)$.
Thus $B$ is a bounded tile for the lattice $L$.  Moreover, if $h\in Z$
and $b$ is the unique recurrent state in $B\cap(h+L)$, then
$F^{kT}(h)=b$ for every sufficiently large integer $k$.

The same primitive-return phenomenon occurs in a second deterministic chip-firing model. In classical sequential chip-firing, any currently legal vertex may fire; the schedule is therefore part of the dynamics \cites{BjornerLovaszShor,BjornerLovaszDirected}. We fix a total order on the vertices of a finite strongly connected digraph and always fire the first legal vertex in this order. If $\Delta_D$ is the firing matrix and $\rho$ is the primitive positive integer vector in its left kernel, then every legal return word has firing vector $q\rho$ for some positive integer $q$. We call a periodic orbit \emph{active} if the rule fires at every point on it. Theorem~\ref{thm:priority-period} shows that every active periodic orbit has $q=1$, and hence least period $\sum_v\rho_v$, the smallest value allowed by the integral kernel of $\Delta_D$.

The hunger game also appears naturally in proportional allocation. Given positive target proportions $\alpha=(\alpha_1,\ldots,\alpha_d)$ with $\sum_{i=1}^{d}\alpha_{i}=1$, the chairman assignment problem asks for a sequence whose running allocation counts stay close to $t\alpha$ \cites{Niederreiter1972,Tijdeman1980}. After $t$ assignments, the deficit of state $i$ is its target
$t\alpha_i$ minus the number of times it has been chosen. We find
 that the rule that always chooses a state of largest current deficit,
with ties resolved by the fixed order, is precisely the hunger game
for the rank-one matrix with every row equal to $\alpha$. Other chairman-assignment algorithms use the same deficit update with a different selector. A classical example is Tijdeman's
algorithm \cite{Tijdeman1980}, which takes both current deficits
and target proportions into account when choosing the next state.

Motivated by these alternative selection rules, we generalize the hunger game by replacing each hunger coordinate $h_i$ by a nondecreasing state-dependent score $\varphi_i(h_i)$ when choosing the next firing. We prove that every recurrent orbit of the resulting monotone-score hunger game still has the primitive period $T$ (Theorem~\ref{thm:monotone-score-period}). Under a pairwise non-domination condition, every orbit is bounded and, for rational $P$, eventually periodic (Theorem~\ref{thm:monotone-score-eventual}).  This generalized hunger-game framework includes the chairman-assignment rules discussed above and may also help us understand greedy algorithms arising in other practical problems.

We begin in Section~\ref{sec:setup} by introducing the necessary
notation and proving Theorem~\ref{thm:conj74}.
In Section~\ref{sec:recurrent}, we study the recurrent set and
prove Theorem~\ref{thm:conj75}.
We then consider fixed-priority chip-firing in
Section~\ref{sec:priority-chip-firing}, where we prove
Theorem~\ref{thm:priority-period}.
Section~\ref{sec:chairman} identifies largest-deficit allocation
as a rank-one hunger game and discusses related allocation rules.
Finally, in Section~\ref{sec:monotone-score}, we introduce
monotone-score hunger games and prove
Theorems~\ref{thm:monotone-score-period}
and~\ref{thm:monotone-score-eventual}.

\section{The Hunger Game and the Common Period}
\label{sec:setup}
 In each coordinate space, $e_i$ denotes the $i$th unit vector and $\one$ the all-ones vector. For any total order $\prec$, write $i\preceq j$ when $i\prec j$ or $i=j$. Unless otherwise specified, we write $h_i$ for the $i$th coordinate of a vector $h$.
 
Throughout Sections~\ref{sec:setup}--\ref{sec:conj75}, we retain the state set $V=\{1,\ldots,n\}$, transition matrix $P$, and total order $\prec$ introduced in Section~\ref{sec:introduction}; these choices remain fixed throughout these sections. The rule of choosing the $\prec$-least state among the states of maximal hunger is the \emph{tie-breaking rule}; we call $\prec$ the \emph{tie-breaking order}.

For a hunger state $h$, let $M(h)=\max_{i\in V}h_i$ and let $\iota(h)$ be the $\prec$-least state $i$ with $h_i=M(h)$, called the \emph{firing state} of $h$. Recall that $H=P-I$, and write $H_i=e_iH$ for its $i$th row. With this notation, the hunger map introduced in Section~\ref{sec:introduction} is
\begin{equation}
 F(h)=h+H_{\iota(h)}.
 \label{eq:hunger-map}
\end{equation}

We call $(V,P,F)$ the \emph{hunger game} determined by $P$ and the fixed tie-breaking order. The hunger game is \emph{rational} if every entry of $P$ is rational. It is \emph{irreducible} if, for every $i,j\in V$, there is an integer $k\ge0$ such that $(P^k)_{ij}>0$; equivalently, every state can be reached from every other state in finitely many steps with positive probability.

The \emph{total hunger} of $h$ is $\sum_i h_i$. A firing to $i$ changes this sum by $\sum_jH_{ij}=0$, so total hunger is conserved. Moreover, $F(h+c\one)=F(h)+c\one$ for every $h\in\R^n$ and $c\in\R$. Subtracting the mean of the coordinates from the initial vector therefore translates the entire forward orbit into the subspace $Z$ defined in Section~\ref{sec:introduction}. For the rest of Sections~\ref{sec:setup}--\ref{sec:conj75}, we regard $F$ as a map from $Z$ to itself.

For $h\in Z$ and each integer $t\ge0$, the firing indices and firing vectors introduced in Section~\ref{sec:introduction} satisfy $i_t(h)=\iota(F^t(h))$ and $v_h(t)=\sum_{r=0}^{t-1}e_{i_r(h)}$, with $v_h(0)=\mathbf 0$, the all-zero vector. Thus
\begin{equation}
 F^t(h)=h+v_h(t)H \quad \text{and}\quad \sum_i(v_h(t))_i=t.
 \label{eq:orbit-formula}
\end{equation} 

From now until the end of Section~\ref{sec:conj75}, assume that $(V,P,F)$ is an irreducible rational hunger game.

Let $\pi$ be the stationary distribution of $P$, that is, the probability vector satisfying $\pi P=\pi$. By the Perron-Frobenius theorem, it is unique and strictly positive, and the left kernel of $H$ is $\R\pi$. Moreover, right multiplication by $H$ is a linear isomorphism of $Z$. Recall from Section~\ref{sec:introduction} that $T$ is the least positive integer such that $p\doteq T\pi$ is integral. Then $p$ is a positive integer vector with $pH=0$, $\sum_i p_i=T$, and $\gcd(p_1,\ldots,p_n)=1$. It follows that
\begin{equation}
 \{z\in\Z^n:zH=0\}=\Z p.
 \label{eq:integer-hunger-kernel}
\end{equation}

The following observation is implicit in \cite[Remark~3.3]{LiPropp}.

\begin{lemma}
\label{lem:period-vector}
If $h\in Z$ is recurrent with period $N$, then $v_h(N)=qp$ and $N=qT$ for some positive integer $q$.
\end{lemma}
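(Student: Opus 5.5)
The argument combines the orbit formula \eqref{eq:orbit-formula} with the integer kernel description \eqref{eq:integer-hunger-kernel}. Suppose $h\in Z$ is recurrent and $F^N(h)=h$ for some positive integer $N$. First apply \eqref{eq:orbit-formula} with $t=N$ to get $h=F^N(h)=h+v_h(N)H$. Hence $v_h(N)H=0$.

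Next, $v_h(N)$ lies in $\Z_{\ge0}^n$, since its coordinates count firings. It is therefore an integer vector in the left kernel of $H$. By \eqref{eq:integer-hunger-kernel}, $v_h(N)=qp$ for some integer $q$.

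To identify $N$, sum the coordinates. The second part of \eqref{eq:orbit-formula} gives $N=\sum_i (v_h(N))_i$, while $\sum_i p_i=T$. Hence $N=\sum_i(qp)_i=qT$. Since $N\ge1$ and $T\ge1$, this forces $q\ge1$, so $q$ is a positive integer.

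There is no real obstacle here. The only nontrivial input is \eqref{eq:integer-hunger-kernel}, which the paper has already derived from the following facts:
\begin{itemize}
\item the left kernel of $H$ is $\R\pi$, by Perron--Frobenius and irreducibility;
\item $p=T\pi$ is a primitive integer vector, meaning $\gcd(p_1,\ldots,p_n)=1$ by the minimality of $T$.
\end{itemize}
With these facts, any integer vector $c\pi$ in the kernel satisfies $c\pi=(c/T)p$ with $c/T\in\Z$. The argument is purely algebraic and uses nothing about the greedy rule beyond the orbit formula.
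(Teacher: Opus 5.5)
Your proposal is correct and follows essentially the same route as the paper: the orbit formula gives $v_h(N)H=0$, the integral-kernel description \eqref{eq:integer-hunger-kernel} gives $v_h(N)=qp$, and summing coordinates together with $N>0$ yields $N=qT$ with $q\ge1$. Your added justification of \eqref{eq:integer-hunger-kernel} from $\gcd(p_1,\ldots,p_n)=1$ is also sound.
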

\begin{proof}
By \eqref{eq:orbit-formula}, $v_h(N)H=0$. Equation~\eqref{eq:integer-hunger-kernel} gives $v_h(N)=qp$ for an integer $q$. Since $v_h(N)\ge0$ and $\sum_i(v_h(N))_i=N>0$, we have $q\ge1$ and $N=qT$.
\end{proof}

The following theorem shows that every recurrent orbit has least period $T$, the smallest value permitted by Lemma~\ref{lem:period-vector}, and firing vector $p$ over one period.

\begin{theorem}[Conjecture~7.4 of Li--Propp~\cite{LiPropp}]
\label{thm:conj74}
Let $(V,P,F)$ be an irreducible rational hunger game with stationary distribution $\pi$, and let $T$ be the least positive integer such that $T\pi$ is integral. Then every recurrent orbit of $F$ has least period $T$. During one period, state $i$ is fired exactly $(T\pi)_i$ times.
\end{theorem}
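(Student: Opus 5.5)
The plan is to rule out $q>1$ in Lemma~\ref{lem:period-vector}. Fix a recurrent orbit $\mathcal O=\{h,F(h),\dots,F^{N-1}(h)\}$ with least period $N=qT$ and $v_h(N)=qp$. Put $m=\min_{g\in\mathcal O}M(g)$ and let $S\subseteq\mathcal O$ be the finite nonempty set of orbit points $g$ with $M(g)=m$.

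The basic tool is a comparison inequality. For $g\in\mathcal O$ and $t\ge 0$ write $v=v_g(t)$. By \eqref{eq:orbit-formula}, the $i$th coordinate of $F^t(g)$ is
\begin{equation*}
 g_i+\sum_k v_kP_{ki}-v_i .
\end{equation*}
If $v\le p$ coordinatewise and $v_i=p_i$, then $pP=p$ bounds this by $g_i+\sum_k p_kP_{ki}-p_i=g_i$. Equality holds exactly when $v_k=p_k$ for every $k$ with $P_{ki}>0$.

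Next I define the quota-return map. Start from $g\in S$ and run the dynamics. Let $t$ be the first time some state $i$ would be fired beyond its quota, i.e.\ $i=\iota(F^t(g))$ with $(v_g(t))_i=p_i$. Up to that time $v_g(t)\le p$, so the comparison gives $F^t(g)_i\le g_i\le M(g)=m$. On the other hand $i$ is a maximal coordinate, so $F^t(g)_i=M(F^t(g))\ge m$.

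All inequalities are therefore equalities. Hence $F^t(g)\in S$, $g_i=m$, and every in-neighbour $k$ of $i$ (every $k$ with $P_{ki}>0$) has already reached its quota at time $t$. If no overshoot happens before time $T$, then $v_g(T)\le p$ with coordinate sum $T$, so $v_g(T)=p$. In that case $F^T(g)=g+pH=g$, which contradicts $q>1$ because the least period would divide $T$.

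So I set $R(g)=F^{t}(g)\in S$, the first overshoot point, with $t=t(g)<T$ whenever $q>1$. This gives a map $R:S\to S$ together with the data of the overshooting state $i(g)$ and the saturated firing vector $v_g(t)$. The remaining work is to get a contradiction from iterating $R$.

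The first contradiction I will try uses two facts. First, at an overshoot the firing vector $w=v_g(t)\le p$ is saturated on $i$ and on all in-neighbours of $i$. Second, $g$ and $R(g)$ both lie in $S$ and share the coordinate value $m$ at $i$. Using irreducibility, I want to propagate the saturation backwards along in-edges. The aim is to show that the deficit $p-w$ is supported on states whose hunger stays strictly below $m$ in a controlled way.

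I would then compare the states fired at $g$ and at $R(g)$ via the tie-breaking order. Both are the $\prec$-least maximizers at level $m$, so the coordinate $i(g)$ must be $\prec$-compatible with $\iota(g)$. Summing the increments $w$ around a cycle of $R$ in the finite set $S$ gives a total firing vector equal to $q'p$ for some integer $q'$. The total number of steps is $\sum t(g)$ over the cycle. A cycle of $R$ of length $\ell$ would have $\sum t<\ell T$, yet its firing vector is a multiple of $p$ with sum a multiple of $T$. Comparing these two counts, together with the bound $w\le p$ in each segment, should force $w=p$ in every segment, which is a contradiction.

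The main obstacle is this last counting step. I must show that a strict deficit in one segment is never compensated by a later segment, given that $R$ only guarantees $w\le p$ on each segment. If this fails as stated, my fallback is to choose $g\in S$ extremal, for instance with $\iota(g)$ $\prec$-least, and run the equality analysis once more. That should show the overshooting state is fired at a point of $S$ that is $\prec$-earlier, which contradicts the extremal choice. The final sentence of the theorem then follows from Lemma~\ref{lem:period-vector} with $q=1$.
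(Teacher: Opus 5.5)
Your setup is the paper's: $S$ is $C_O$, your $R$ is the quota-return map $\mathcal R_O$, and your equality analysis correctly gives $t(g)<T$, $R(g)\in S$ and $g_{i(g)}=m$ (Lemma~\ref{lem:quota-crossing}). But the proposal stops exactly where the contradiction has to come from.

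\textbf{The counting step does not work as stated.} Take a cycle $g^0,\dots,g^{\ell-1}$ of $R$ with segment vectors $w_j=v_{g^j}(t(g^j))$. You get $\sum_j w_j=q'p$ with $w_j\le p$ and $w_j\ne p$. These constraints are consistent with $q'<\ell$. For example, $p=(1,1)$, $w_0=(1,0)$, $w_1=(0,1)$ gives $q'=1<\ell=2$. To stop a deficit in one segment from being compensated in another, you need one coordinate saturated in \emph{every} segment.

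The backward propagation of saturation along in-edges is also unsupported. For an in-neighbour $k$ of $i$ you only get $F^t(g)_k\le g_k$. Nothing forces equality there, so the saturation does not spread.

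\textbf{The missing idea} is to sharpen your vague ``$\prec$-compatibility'' into the monotonicity $\iota(g)\preceq\iota(R(g))$.
\begin{enumerate}
\item Since $g_{i(g)}=m=M(g)$, the overshooting state $i(g)=\iota(R(g))$ is a maximizer at $g$. As $\iota(g)$ is the $\prec$-least maximizer, $\iota(g)\preceq\iota(R(g))$.
\item Along a cycle of $R$, a weakly $\prec$-increasing cyclic sequence is constant. So the overshooting state is the same $s$ in every segment.
\item Then $(\sum_j w_j)_s=\ell p_s$ forces $q'=\ell$. Hence $\sum_j(p-w_j)=\mathbf 0$ with every summand nonnegative, so $w_j=p$ for all $j$. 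This contradicts $t(g^j)<T$.
\end{enumerate}
The paper phrases the same step differently. The vector $\sum_j(w_j-p)$ lies in the left kernel of $H$, is nonpositive and nonzero, and has $s$th coordinate $0$. That is impossible, since the left kernel is $\R\pi$ with $\pi$ strictly positive.

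\textbf{Your fallback has the inequality backwards.} The overshooting state is $\succeq\iota(g)$, not $\prec$-earlier. So choosing $g$ with $\iota(g)$ $\prec$-least gives no contradiction. Choosing it $\prec$-greatest only yields $\iota(R(g))=\iota(g)$. You would still need the cycle argument above, run on the $R$-invariant subset of $S$ with that firing state.
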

\subsection{The quota-return map}
\label{sec:quota-return-map}

To prove Theorem~\ref{thm:conj74}, it remains to exclude recurrent orbits of least period $qT$ with an integer $q\ge2$. We call $p_i$ the \emph{firing quota} of state $i\in V$. For $h\in Z$, define the \emph{$p$-quota time}
\begin{equation}
 \tau(h)=\min\{t\in\Z_{\ge0}:(v_h(t))_{i_t(h)}=p_{i_t(h)}\},
 \label{eq:quota-time}
\end{equation}
with $\tau(h)=\infty$ if the set is empty. Thus $\tau(h)$ is the first time $t$ at which the state $i_t(h)$ about to fire has already fired $p_{i_t(h)}$ times during the first $t$ steps.

\begin{definition}
\label{def:quota-return-map}
For every recurrent orbit $O$, define
$m_O=\min_{h\in O}M(h)$ and
$C_O=\{h\in O:M(h)=m_O\}$.
If $O$ has least period greater than $T$, then, for
$h\in C_O$ with $\tau(h)<\infty$, set
$\mathcal R_O(h)=F^{\tau(h)}(h)$.
We call $\mathcal R_O$ the quota-return map of $O$.
\end{definition}

The following lemma shows that $\mathcal R_O$ is defined at every point of $C_O$, maps $C_O$ into itself, and satisfies $\iota(h)\preceq\iota(\mathcal R_O(h))$ for every $h\in C_O$.
\begin{lemma}
\label{lem:quota-crossing}
Let $O$ be a recurrent orbit of least period greater than $T$, and let $C_O$ and $\mathcal R_O$ be as in Definition~\ref{def:quota-return-map}. For every $h\in C_O$, the following statements hold.
\begin{enumerate}[label=\textup{(\roman*)},ref=\textup{(\roman*)},leftmargin=*]
 \item\label{item:quota-time}
 $1\le\tau(h)<T$.

 \item\label{item:quota-displacement}
 Put $s=\iota(\mathcal R_O(h))$. Then $v_h(\tau(h))\le p$ coordinatewise, $v_h(\tau(h))\ne p$, and $(v_h(\tau(h)))_s=p_s$. Moreover,
 \begin{equation}
  \mathcal R_O(h)=h+\bigl(v_h(\tau(h))-p\bigr)H.
  \label{eq:quota-crossing}
 \end{equation}

 \item\label{item:quota-monotonicity}
 With $s$ as in \ref{item:quota-displacement}, we have $\mathcal R_O(h)\in C_O$, $h_s=\mathcal R_O(h)_s=m_O$, and $\iota(h)\preceq\iota(\mathcal R_O(h))$.
\end{enumerate}
In particular, $\mathcal R_O:C_O\to C_O$ is a well-defined self-map.
\end{lemma}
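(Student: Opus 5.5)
The plan is to fix a recurrent orbit $O$ of least period $N>T$ and a point $h\in C_O$. By Lemma~\ref{lem:period-vector}, $N=qT$ with $q\ge2$. I will prove the three items in order. Each uses only the definition of $\tau$, the orbit formula \eqref{eq:orbit-formula}, and the relation $pH=0$. The main input in (iii) is the minimality of $M$ on $C_O$.

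For \ref{item:quota-time}: at $t=0$ we have $v_h(0)=\mathbf 0$, while every $p_i\ge1$, so $t=0$ is not in the set defining $\tau$. Hence $\tau(h)\ge1$. For the upper bound, I will argue that $v_h(T)\ne p$. Otherwise \eqref{eq:orbit-formula} and $pH=0$ give $F^T(h)=h$, contradicting least period $N>T$. Both $v_h(T)$ and $p$ have coordinate sum $T$, so some coordinate satisfies $(v_h(T))_j>p_j$. Consider the step $t\le T-1$ at which $j$ fires for the $(p_j+1)$-st time. At that step $(v_h(t))_j=p_j$ and $i_t(h)=j$, so $\tau(h)\le t<T$.

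For \ref{item:quota-displacement}, write $\tau=\tau(h)$ and $v=v_h(\tau)$. By minimality of $\tau$, every firing at a time $t<\tau$ occurs at a state $j$ with $(v_h(t))_j<p_j$. Thus no coordinate of the running firing vector ever exceeds $p$, and $v\le p$. The state $s=i_\tau(h)=\iota(F^\tau(h))=\iota(\mathcal R_O(h))$ satisfies $v_s=p_s$ by the definition of $\tau$. Also $v\ne p$, since $\sum_i v_i=\tau<T=\sum_i p_i$. Finally, \eqref{eq:orbit-formula} together with $pH=0$ gives $\mathcal R_O(h)=h+vH=h+(v-p)H$, which is \eqref{eq:quota-crossing}.

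For \ref{item:quota-monotonicity}, which I expect to be the only step needing an idea, set $d=p-v\ge0$ and $g=\mathcal R_O(h)$. Since $H=P-I$, we get $g=h-dH=h+d-dP$. I will evaluate this at the coordinate $s$, where $d_s=0$:
\[
g_s=h_s-(dP)_s\le h_s\le M(h)=m_O ,
\]
using $d\ge0$ and $P\ge0$. On the other hand, $g\in O$ and $s=\iota(g)$, so $g_s=M(g)\ge m_O$ by the definition of $m_O$. All the inequalities are therefore equalities. This gives $M(g)=m_O$, so $g\in C_O$, and $g_s=h_s=m_O$. Since $h_s=M(h)$, the state $s$ is a maximizer of $h$, and the tie-breaking rule gives $\iota(h)\preceq s=\iota(g)$. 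Together with \ref{item:quota-time}, which gives $\tau(h)<\infty$ for every $h\in C_O$, this shows that $\mathcal R_O$ is a well-defined self-map of $C_O$.
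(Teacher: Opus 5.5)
Your proof is correct and follows essentially the same route as the paper. You use the same quota-time bounds, the displacement identity obtained from $pH=0$, and the sandwich $m_O\le M(\mathcal R_O(h))=\mathcal R_O(h)_s\le h_s\le M(h)=m_O$ via $(p-v)_s=0$ and $P\ge 0$; the only cosmetic difference is that in (i) you locate the $(p_j+1)$-st firing of an over-quota state directly, whereas the paper argues by contradiction from $v_h(T)\le p$.
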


\begin{proof}
Fix $h\in C_O$. Suppose that $\tau(h)\ge T$. By the definition of $\tau(h)$, none of the first $T$ firings makes a coordinate of the firing vector larger than the corresponding coordinate of $p$. Hence $v_h(T)\le p$ coordinatewise. Since $\sum_i(v_h(T))_i=T=\sum_i p_i$, we have $v_h(T)=p$. Equation~\eqref{eq:orbit-formula} gives $F^T(h)=h$, contradicting the assumption that the least period of $O$ is greater than $T$. Thus $\tau=\tau(h)<T$. Since $v_h(0)=\mathbf 0$ and every $p_i$ is positive, $\tau\ge1$. This proves~\ref{item:quota-time}, so $\mathcal R_O(h)$ is defined. Put $s=\iota(F^\tau(h))$.

The definition of $\tau(h)$ gives $v_h(\tau)\le p$ and $(v_h(\tau))_s=p_s$. Since $\sum_i((v_h(\tau))_i-p_i)=\tau-T<0$, the vector $v_h(\tau)-p$ is not $\mathbf 0$. Using $pH=0$ in \eqref{eq:orbit-formula} gives $\mathcal R_O(h)=h+(v_h(\tau)-p)H$. This proves~\ref{item:quota-displacement}.

Since $(v_h(\tau)-p)_s=0$ and the off-diagonal entries of $H$ are nonnegative, we have $\mathcal R_O(h)_s=h_s+\sum_{u\ne s}((v_h(\tau))_u-p_u)P_{us}\le h_s$. Since $\mathcal R_O(h)\in O$ and $s=\iota(\mathcal R_O(h))$, it follows that
\[
 m_O\le M(\mathcal R_O(h))=\mathcal R_O(h)_s
 \le h_s\le M(h)=m_O.
\]
Every inequality in this chain is an equality, so $\mathcal R_O(h)\in C_O$ and $h_s=\mathcal R_O(h)_s=m_O$. The state $s$ also has maximal hunger at $h$, so  $\iota(h)\preceq s=\iota(\mathcal R_O(h))$. This proves~\ref{item:quota-monotonicity}.
\end{proof}

\begin{proof}[Proof of Theorem~\ref{thm:conj74}]
Let $h\in\R^n$ be recurrent, and let $N$ be its least period. We may  assume that $h\in Z$. Lemma~\ref{lem:period-vector} gives $v_h(N)=qp$ and $N=qT$ for some positive integer $q$. It remains to prove that $q=1$.

Suppose that $q\ge2$, put $O=\{h,F(h),\ldots,F^{N-1}(h)\}$, and let $\mathcal R_O$ be its quota-return map. By Lemma~\ref{lem:quota-crossing}\ref{item:quota-time} and Lemma~\ref{lem:quota-crossing}\ref{item:quota-monotonicity}, $\mathcal R_O:C_O\to C_O$ is a well-defined self-map. Since $C_O$ is finite and nonempty, $\mathcal R_O$ has a cycle $h^0,h^1,\ldots,h^{r-1}$, where $\mathcal R_O(h^j)=h^{j+1}$ and superscripts are read modulo $r$. Lemma~\ref{lem:quota-crossing}\ref{item:quota-monotonicity} gives $\iota(h^j)\preceq\iota(h^{j+1})$ for every $0\le j<r$. Thus all these firing states are equal; denote their common value by $s$.

Set $R=\sum_{j=0}^{r-1}(v_{h^j}(\tau(h^j))-p)$. By Lemma~\ref{lem:quota-crossing}\ref{item:quota-displacement}, we have $h^{j+1}=h^j+(v_{h^j}(\tau(h^j))-p)H$ for every $0\le j<r$. Summing these identities over $0\le j<r$ gives $RH=0$. Lemma~\ref{lem:quota-crossing}\ref{item:quota-displacement} also shows that each vector $v_{h^j}(\tau(h^j))-p$ is coordinatewise nonpositive and is not $\mathbf 0$. The equality $s=\iota(h^{j+1})$ gives $(v_{h^j}(\tau(h^j))-p)_s=0$ for every $0\le j<r$. Consequently, $R\le\mathbf 0$, $R\ne\mathbf 0$, and $R_s=0$. The relations $RH=0$, $R\ne\mathbf 0$, and $R_s=0$ contradict the fact that the left kernel of $H$ is spanned by the strictly positive vector $\pi$. Thus $q=1$, ending the proof.
\end{proof}

\section{The Recurrent Set as a Lattice
Fundamental Domain}
\label{sec:recurrent}
\label{sec:conj75}

Throughout this section, we retain the irreducible rational hunger game and the notation fixed in Section~\ref{sec:setup}. Let $\Lambda=Z\cap\Z^n$ and define
\begin{equation}
 L=\Lambda H
   =\operatorname{span}_{\Z}\{H_i-H_n:1\le i<n\}.
 \label{eq:hunger-lattice}
\end{equation}
Since $x\mapsto xH$ is an isomorphism on $Z$, the subgroup $L$ is a full-rank lattice in $Z$. An \emph{$L$-coset} is a set $h+L$ with $h\in Z$; we write $[h]=h+L$ for the corresponding element of $Z/L$.

\begin{proposition}
\label{prop:quota-test}
For every $h\in Z$, the conditions $h\in B$, $v_h(T)=p$, and $F^T(h)=h$ are equivalent.
\end{proposition}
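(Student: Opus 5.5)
We will prove the cycle of implications $h\in B\Rightarrow F^T(h)=h\Rightarrow v_h(T)=p\Rightarrow h\in B$.

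\emph{From $h\in B$ to $F^T(h)=h$.} If $h\in B$, then $h$ is recurrent. By Theorem~\ref{thm:conj74} its least period is exactly $T$, so $F^T(h)=h$.

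\emph{From $F^T(h)=h$ to $v_h(T)=p$.} Suppose $F^T(h)=h$. Then $h$ is recurrent with period $T$. Lemma~\ref{lem:period-vector}, applied with $N=T$, gives $v_h(T)=qp$ and $T=qT$. Hence $q=1$ and $v_h(T)=p$. One could instead argue directly: \eqref{eq:orbit-formula} gives $v_h(T)H=0$, so \eqref{eq:integer-hunger-kernel} gives $v_h(T)\in\Z p$, and comparing coordinate sums gives $v_h(T)=p$.

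\emph{From $v_h(T)=p$ to $h\in B$.} Suppose $v_h(T)=p$. Equation~\eqref{eq:orbit-formula} together with $pH=0$ gives $F^T(h)=h+pH=h$. Thus $h$ is recurrent, i.e.\ $h\in B$.

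No step is a real obstacle, since every implication is an immediate consequence of Theorem~\ref{thm:conj74}, Lemma~\ref{lem:period-vector}, and the orbit formula. The only point needing care is the first implication. There the full strength of Theorem~\ref{thm:conj74} is used: it excludes recurrent points whose least period is a proper multiple $qT$ with $q\ge2$, for which $F^T(h)\ne h$ even though $h\in B$. The value of the proposition is that it turns recurrence into a finite test. It suffices to run $T$ steps and check whether the firing vector equals the quota vector $p$.
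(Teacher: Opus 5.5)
Your proof is correct and is essentially the paper's argument, with the cycle of implications reordered: the paper gets $h\in B\Rightarrow v_h(T)=p$ directly from the firing-count clause of Theorem~\ref{thm:conj74} and then closes the cycle via $v_h(T)=p\Rightarrow F^T(h)=h\Rightarrow h\in B$. You instead use the least-period clause to get $F^T(h)=h$ and recover $v_h(T)=p$ from the integer kernel \eqref{eq:integer-hunger-kernel} and a coordinate-sum comparison.
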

\begin{proof}
Suppose first that $h\in B$. Then $h$ is recurrent, and Theorem~\ref{thm:conj74} gives $v_h(T)=p$. If $v_h(T)=p$, then \eqref{eq:orbit-formula} and $pH=0$ give $F^T(h)=h+pH=h$. Finally, $F^T(h)=h$ implies that $h$ is recurrent, so $h\in B$.
\end{proof}

Let $\mathcal S_p$ be the set of words $\sigma=(i_0,\ldots,i_{T-1})$ in which each state $i$ occurs $p_i$ times. For $\sigma\in\mathcal S_p$, let $B_\sigma$ consist of the vectors $h\in Z$ whose first $T$ firings follow $\sigma$; we call $B_\sigma$ the \emph{firing cell} of $\sigma$. Fix $\sigma=(i_0,\ldots,i_{T-1})\in\mathcal S_p$, and define $g_t=\sum_{r=0}^{t-1}H_{i_r}$ for $0\le t<T$. By \eqref{eq:orbit-formula}, the hunger vector immediately before the firing at time $t$ is $h+g_t$. Consequently, $h\in B_\sigma$ if and only if the following inequalities hold for every $t\in\{0,\ldots,T-1\}$ and every $j\in V\setminus\{i_t\}$:
\begin{equation}
 \begin{cases}
 h_{i_t}-h_j>g_{t,j}-g_{t,i_t},&\text{if }j\prec i_t,\\
 h_{i_t}-h_j\ge g_{t,j}-g_{t,i_t},&\text{if }i_t\prec j.
 \end{cases}
 \label{eq:firing-cell}
\end{equation}
The deterministic firing rule makes the sets $B_\sigma$ pairwise disjoint. Proposition~\ref{prop:quota-test} therefore gives
\begin{equation}
 B=\bigsqcup_{\sigma\in\mathcal S_p}B_\sigma.
 \label{eq:recurrent-cell-decomposition}
\end{equation}
For each $\sigma\in\mathcal S_p$, equation~\eqref{eq:firing-cell} describes $B_\sigma$ as the solution set in $Z$ of finitely many linear inequalities. In particular, $B_\sigma$ is convex. Since every state occurs in $\sigma$, these inequalities bound every coordinate difference $h_i-h_j$; together with $\sum_i h_i=0$, this shows that $B_\sigma$ is bounded. Hence \eqref{eq:recurrent-cell-decomposition} expresses $B$ as a finite disjoint union of bounded convex sets indexed by their firing words. This is the decomposition observed by Li and Propp~\cite[Remark~7.2]{LiPropp}.

Li and Propp observed that $F$ induces a translation on $Z/L$ \cite[proof of Proposition~7.6]{LiPropp}. All rows of $H$ have the same image in $Z/L$, because $H_i-H_j=(e_i-e_j)H\in L$. Therefore
\begin{equation}
 [F(h)]=[h]+[H_1]
 \label{eq:quotient-translation}
\end{equation}
for every $h\in Z$.

\begin{lemma}
\label{lem:phase-order}
The element $[H_1]\in Z/L$ has order $T$.
\end{lemma}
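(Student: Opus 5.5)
We must show that $k[H_1]=0$ in $Z/L$ exactly when $T\mid k$.

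\emph{Showing $T[H_1]=0$.} Since $pH=0$ and $\sum_i p_i=T$, we can write
\[
TH_1=\sum_i p_iH_1=\sum_i p_iH_i+\sum_i p_i(H_1-H_i)=0+\sum_i p_i(e_1-e_i)H .
\]
The vector $\sum_i p_i(e_1-e_i)$ is integral and has coordinate sum zero, so it lies in $\Lambda$. Hence $TH_1\in L$, and the order of $[H_1]$ divides $T$.

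\emph{Showing the order is a multiple of $T$.} Suppose $kH_1\in L$ for some positive integer $k$. Then $kH_1=zH$ for some $z\in\Lambda$, so $(ke_1-z)H=0$. The vector $ke_1-z$ is integral, so by \eqref{eq:integer-hunger-kernel} we have $ke_1-z=cp$ for some integer $c$. Summing coordinates and using $\sum_i z_i=0$ gives $k=c\sum_i p_i=cT$. Thus $T\mid k$.

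Together these show the order is exactly $T$. The only delicate point is using the integrality of $z$ to invoke \eqref{eq:integer-hunger-kernel} rather than merely the real kernel $\R\pi$. An alternative would go through the all-zero recurrent orbit and Theorem~\ref{thm:conj74}, but the lattice computation above is direct and self-contained.
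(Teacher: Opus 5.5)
Your proof is correct and follows essentially the same argument as the paper. Your vector $\sum_i p_i(e_1-e_i)$ is exactly the paper's $Te_1-p\in\Lambda$. The converse direction, which applies the integral kernel \eqref{eq:integer-hunger-kernel} to $ke_1-z$ and then sums coordinates, is identical to the paper's.
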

\begin{proof}
Since $Te_1-p\in\Lambda$ and $pH=0$, we have $TH_1\in L$. Conversely, let $a\in\Z$ satisfy $aH_1\in L$. By the definition of $L$, there is a vector $z\in\Lambda$ such that $aH_1=zH$, and hence $(ae_1-z)H=0$. By \eqref{eq:integer-hunger-kernel}, $ae_1-z=kp$ for some integer $k$. Taking the sum of the coordinates on both sides of this equality, and using $\sum_i z_i=0$ and $\sum_i p_i=T$, gives $a=kT$. Thus $a[H_1]=0$ in $Z/L$ if and only if $T$ divides $a$.
\end{proof}

For a recurrent orbit $O\subset Z$, define its set of $L$-cosets by $ \mathcal Q_L(O)=\{[x]:x\in O\}\subseteq Z/L. $

\begin{corollary}
\label{cor:orbit-cosets}
Every recurrent orbit meets exactly $T$ cosets of $L$ in $Z$. If two recurrent orbits meet a common coset of $L$, then they meet the same $T$ cosets.
\end{corollary}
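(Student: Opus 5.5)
The plan is to combine Theorem~\ref{thm:conj74} with the translation structure \eqref{eq:quotient-translation} and Lemma~\ref{lem:phase-order}. Let $O$ be a recurrent orbit and pick $h\in O$. By Theorem~\ref{thm:conj74}, $O=\{h,F(h),\ldots,F^{T-1}(h)\}$, and these $T$ points are distinct. Iterating \eqref{eq:quotient-translation} gives $[F^t(h)]=[h]+t[H_1]$ for every $t\ge0$, so
\[
 \mathcal Q_L(O)=\{[h]+t[H_1]:0\le t<T\}.
\]
By Lemma~\ref{lem:phase-order}, $[H_1]$ has order exactly $T$ in $Z/L$. Hence the elements $[h]+t[H_1]$ for $0\le t<T$ are pairwise distinct. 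Therefore $\mathcal Q_L(O)$ is exactly the coset $[h]+\langle[H_1]\rangle$ of the cyclic subgroup $\langle[H_1]\rangle\subseteq Z/L$, and it has $T$ elements. This proves the first assertion.

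For the second assertion, let $O$ and $O'$ be recurrent orbits, and suppose $[x]=[x']$ for some $x\in O$ and $x'\in O'$. Applying the first part with base points $x$ and $x'$ gives
\[
 \mathcal Q_L(O)=[x]+\langle[H_1]\rangle=[x']+\langle[H_1]\rangle=\mathcal Q_L(O').
\]
So the two orbits meet the same $T$ cosets.

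No step here is a real obstacle, since the substantive inputs are Theorem~\ref{thm:conj74} (period exactly $T$) and Lemma~\ref{lem:phase-order} (order exactly $T$). The one point needing care is that "meets exactly $T$ cosets" requires both ingredients. The orbit has only $T$ points, so it meets at most $T$ cosets. Distinct points could a priori lie in a common coset, and this is excluded by the order of $[H_1]$ being $T$ rather than a proper divisor of $T$.
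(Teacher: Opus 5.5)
Your proof is correct and follows the paper's argument essentially line for line: Theorem~\ref{thm:conj74} gives the $T$ orbit points, iterating \eqref{eq:quotient-translation} gives $[F^t(h)]=[h]+t[H_1]$, Lemma~\ref{lem:phase-order} makes these cosets pairwise distinct, and the second assertion follows by re-basing at points $x\in O$ and $x'\in O'$ with $[x]=[x']$. Describing $\mathcal Q_L(O)$ as the coset $[h]+\langle[H_1]\rangle$ is only a repackaging of the paper's set $\{[h]+t[H_1]:0\le t<T\}$ and changes nothing in substance.
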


\begin{proof}
Let $O$ be a recurrent orbit and fix $h\in O$. By Theorem~\ref{thm:conj74}, the orbit $O$ consists of the $T$ points $h,F(h),\ldots,F^{T-1}(h)$. Moreover, \eqref{eq:quotient-translation} gives
\[
 [F^t(h)]=[h]+t[H_1]
 \qquad \text{for $t=0,1,\ldots,T-1$.}
\]
Since $[H_1]$ has order $T$ by Lemma~\ref{lem:phase-order}, these $T$ cosets are distinct. Thus $O$ meets exactly $T$ cosets of $L$, once in each coset.

Now suppose that two recurrent orbits $O$ and $O'$ meet a common coset of $L$. Choose $x\in O$ and $x'\in O'$ with $[x]=[x']$. Applying \eqref{eq:quotient-translation} from these two points gives
\[
 \mathcal Q_L(O)
 =\{[x]+t[H_1]:0\le t<T\}
 =\{[x']+t[H_1]:0\le t<T\}
 =\mathcal Q_L(O').
\]
Hence $O$ and $O'$ meet the same $T$ cosets.
\end{proof}

We first prove the covering assertion, originally established by Li and Propp~\cite[Proposition~7.7]{LiPropp}.

\begin{proposition}
\label{prop:coverage}
Every coset of $L$ in $Z$ contains a recurrent hunger vector.
\end{proposition}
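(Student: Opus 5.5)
Fix a coset $h+L$ with $h\in Z$. The plan is to follow the forward orbit of $h$ and use the translation structure on $Z/L$ from \eqref{eq:quotient-translation}. We may invoke the result of Li and Propp recalled in the introduction: every forward orbit of a rational hunger game is bounded and eventually periodic \cite[Lemma~3.2 and Remark~3.3]{LiPropp}. So there is $t_0\ge0$ such that $b=F^{t_0}(h)$ is recurrent. Let $O$ be its recurrent orbit. By Theorem~\ref{thm:conj74}, $O=\{b,F(b),\ldots,F^{T-1}(b)\}$.

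Next we track the cosets. By \eqref{eq:quotient-translation}, $[F^t(h)]=[h]+t[H_1]$ for every $t\ge0$. For each $k\ge0$ the point $F^{t_0+k}(h)=F^k(b)$ lies in $O$ and lies in the coset $[h]+(t_0+k)[H_1]$. Lemma~\ref{lem:phase-order} says $[H_1]$ has order $T$. Choose $k\in\{0,\ldots,T-1\}$ with $t_0+k\equiv0\pmod T$. Then $F^{t_0+k}(h)\in O\subseteq B$ and $[F^{t_0+k}(h)]=[h]$. This point is a recurrent vector in $h+L$, as required. This is the same bookkeeping as in Corollary~\ref{cor:orbit-cosets}: the orbit $O$ runs through all $T$ cosets $[b]+t[H_1]$, and $[h]=[b]-t_0[H_1]$ is among them.

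The main point to check is that eventual periodicity for every starting point in $Z$ is available. If one prefers not to cite it, I would reprove it. Li and Propp's boundedness gives a bounded forward orbit. Each point of that orbit lies in $h+\Lambda' H$, where $\Lambda'$ is the set of integer vectors with coordinate sum in $\Z$ (via $v_h(t)$). By rationality of $P$, the set $h+\Z^nH$ is a translate of a discrete set. A bounded subset of it is finite, so the orbit revisits a point and is eventually periodic. Boundedness itself is the only genuinely analytic input. It follows from the greedy rule: firing a maximal coordinate lowers it by $1-P_{ii}>0$, and irreducibility prevents any coordinate from drifting. That is Li and Propp's argument, and I would cite rather than redo it.
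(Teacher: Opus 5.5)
Your argument is correct and is essentially the paper's proof: the paper also cites Li--Propp's eventual periodicity, takes $b=F^{kT}(h)$ with $kT\ge t_0$, and uses $[F^{kT}(h)]=[h]+kT[H_1]=[h]$. Your appeal to Theorem~\ref{thm:conj74} and to the exact order of $[H_1]$ is harmless but unnecessary, since only the relation $T[H_1]=0$ is needed.
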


\begin{proof}
Fix $h\in Z$. Since the forward orbit of $h$ is eventually periodic, there is an integer $t_0\ge0$ such that $F^t(h)$ is recurrent for every $t\ge t_0$. Choose $k$ such that $kT\ge t_0$, and set $b=F^{kT}(h)$. Then $b$ is recurrent and belongs to the same $L$-coset as $h$.
\end{proof}

\subsection{The two-orbit quota-return map}
\label{sec:two-orbit-quota-return}

Throughout this subsection, fix two distinct recurrent orbits $O$ and $O'$, and suppose that they meet a common $L$-coset. By Corollary~\ref{cor:orbit-cosets}, the two orbits meet the same $T$ cosets, and each contains exactly one point in each of these cosets. Thus, for every $h\in O\cup O'$, the other orbit contains a unique point in the same $L$-coset as $h$. For $h\in O\cup O'$, denote this point by $\phi(h)$. Since $\phi(h)-h\in L$, there is a unique vector $z(h)\in\Lambda$ such that $\phi(h)=h+z(h)H$. Set $\lambda(h)=\max_{i\in V}\lceil z(h)_i/p_i\rceil$, and, for $0\le t\le T$, let
\[r_h(t)=z(h)-\lambda(h)p+v_{\phi(h)}(t).\]
Let $\widehat{\tau}(h)$ be the least $t\in\{0,\ldots,T-1\}$ for which
$r_h(t+1)_{i_t(\phi(h))}>0$, and set
$\widehat{\tau}(h)=\infty$ if no such time exists. Thus
$\widehat{\tau}(h)$ is the first time at which the next firing on the
orbit of $\phi(h)$ makes the selected coordinate of $r_h$ positive.
Whenever $\widehat{\tau}(h)<\infty$, define
$\mathcal R_{O,O'}(h)=F^{\widehat{\tau}(h)}(\phi(h))$. We call
$\mathcal R_{O,O'}$ the
\emph{two-orbit quota-return map} associated with $O$ and $O'$.
Since these two orbits remain fixed throughout this subsection, we
write $\mathcal R$ for $\mathcal R_{O,O'}$.

The next lemma shows that the two-orbit quota time
$\widehat{\tau}(h)$ is finite for every $h\in O\cup O'$ and records
the properties of $\mathcal R$ needed below.
\begin{lemma}
\label{lem:two-orbit-quota-properties}
For every $h\in O\cup O'$, the following statements hold.
\begin{enumerate}[
 label=\textup{(\roman*)},
 ref=\textup{(\roman*)},
 leftmargin=*
]
 \item\label{item:two-orbit-quota-time}
 $0\le\widehat{\tau}(h)<T$, so $\mathcal R(h)$ is defined.

 \item\label{item:two-orbit-quota-displacement}
 Put $s=\iota(\mathcal R(h))$. Then
 $r_h(\widehat{\tau}(h))\in
 \Z_{\le0}^n\setminus\{\mathbf 0\}$,
 $r_h(\widehat{\tau}(h))_s=0$, and
 $\mathcal R(h)=h+r_h(\widehat{\tau}(h))H$.

 \item\label{item:two-orbit-quota-bound}
 The point $\mathcal R(h)$ lies on the orbit among $O$ and $O'$
 that does not contain $h$, and
 \[
  M(\mathcal R(h))
  =h_s+\sum_{u\ne s}r_h(\widehat{\tau}(h))_uP_{us}
  \le h_s\le M(h).
 \]
\end{enumerate}
\end{lemma}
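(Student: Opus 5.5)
The plan is to read everything off the sign structure of the integer vectors $r_h(t)$, in the same way as in the proof of Lemma~\ref{lem:quota-crossing}. Fix $h\in O\cup O'$, write $z=z(h)$, $\lambda=\lambda(h)$ and $\widehat\tau=\widehat\tau(h)$.

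First I will collect the basic facts about $z$ and $\lambda$.
\begin{itemize}[leftmargin=*]
\item $z\neq\mathbf 0$. Indeed, $O\ne O'$ are disjoint orbits, so $\phi(h)\ne h$.
\item $\lambda\ge1$. Since $\sum_i z_i=0$ and $z\ne\mathbf 0$, some coordinate $z_i$ is positive, so $\lceil z_i/p_i\rceil\ge1$.
\item $r_h(0)=z-\lambda p\le\mathbf 0$ coordinatewise. This follows from $z_i/p_i\le\lambda$ for every $i$.
\item The coordinate sum of $r_h(t)$ is $t-\lambda T$, because $\sum_i z_i=0$, $\sum_ip_i=T$ and $\sum_i v_{\phi(h)}(t)_i=t$.
\item Passing from $r_h(t)$ to $r_h(t+1)$ adds $1$ to the coordinate $i_t(\phi(h))$ only.
\end{itemize}

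For \ref{item:two-orbit-quota-time}, suppose that no time $t<T$ qualifies.
\begin{itemize}[leftmargin=*]
\item Then an induction on $t$, using the single-coordinate increments, gives $r_h(t)\le\mathbf 0$ for all $0\le t\le T$.
\item By Theorem~\ref{thm:conj74}, $\phi(h)$ is recurrent with $v_{\phi(h)}(T)=p$. Hence $r_h(T)=z-(\lambda-1)p\le\mathbf 0$.
\item This says $z_i/p_i\le\lambda-1$ for every $i$, contradicting the definition of $\lambda$ as $\max_i\lceil z_i/p_i\rceil$.
\end{itemize}
So $\widehat\tau<T$, and the bound $\widehat\tau\ge0$ is trivial.

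For \ref{item:two-orbit-quota-displacement}, we first establish the sign pattern of $r_h(\widehat\tau)$.
\begin{itemize}[leftmargin=*]
\item By minimality of $\widehat\tau$ and the same induction, $r_h(\widehat\tau)\le\mathbf 0$.
\item Put $s=i_{\widehat\tau}(\phi(h))=\iota(F^{\widehat\tau}(\phi(h)))=\iota(\mathcal R(h))$. The coordinate $s$ is an integer $\le0$ that becomes positive after adding $1$, so $r_h(\widehat\tau)_s=0$.
\item The coordinate sum of $r_h(\widehat\tau)$ is $\widehat\tau-\lambda T<0$, because $\lambda\ge1$ and $\widehat\tau<T$. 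Hence $r_h(\widehat\tau)\ne\mathbf 0$.
\end{itemize}
The displacement formula then follows from \eqref{eq:orbit-formula} and $pH=0$:
\[
\mathcal R(h)=\phi(h)+v_{\phi(h)}(\widehat\tau)H=h+\bigl(z+v_{\phi(h)}(\widehat\tau)\bigr)H=h+r_h(\widehat\tau)H .
\]

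For \ref{item:two-orbit-quota-bound}, the orbit claim is immediate: $\mathcal R(h)$ is a forward iterate of $\phi(h)$, which lies on the other orbit.
\begin{itemize}[leftmargin=*]
\item Since $s=\iota(\mathcal R(h))$, we have $M(\mathcal R(h))=\mathcal R(h)_s$.
\item Expanding $\mathcal R(h)_s=h_s+\sum_u r_h(\widehat\tau)_uH_{us}$, the diagonal term vanishes because $r_h(\widehat\tau)_s=0$. For $u\ne s$ we have $H_{us}=P_{us}$, which gives the displayed identity.
\item The inequality $\le h_s$ holds because each $r_h(\widehat\tau)_u\le0$ and each $P_{us}\ge0$. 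Finally $h_s\le M(h)$ trivially.
\end{itemize}

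The only step needing care is \ref{item:two-orbit-quota-time}. It is where the recurrence of $\phi(h)$ (through $v_{\phi(h)}(T)=p$) and the ceiling in $\lambda$ are combined to force a positive coordinate before time $T$. The rest is bookkeeping parallel to Lemma~\ref{lem:quota-crossing}.
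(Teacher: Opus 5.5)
Your proof is correct and follows the paper's argument essentially step for step. For (i), both use $r_h(0)\le\mathbf 0$ together with the fact that $r_h(T)=z(h)-(\lambda(h)-1)p$ must have a positive coordinate; you phrase this by contradiction, while the paper tracks a coordinate $j$ attaining the ceiling. Parts (ii) and (iii) are the same bookkeeping. The one genuine variation is that you get $r_h(\widehat{\tau}(h))\ne\mathbf 0$ from the coordinate sum $\widehat{\tau}(h)-\lambda(h)T<0$ via $\lambda(h)\ge1$, whereas the paper deduces it from $\mathcal R(h)\ne h$; both ultimately rest on $O\cap O'=\varnothing$.
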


\begin{proof}
Fix $h\in O\cup O'$, and write $\lambda=\lambda(h)$. The definition of $\lambda$ gives $r_h(0)\le0$ coordinatewise. Choose $j\in V$ such that $\lceil z(h)_j/p_j\rceil=\lambda$. Since $\phi(h)$ is recurrent, Theorem~\ref{thm:conj74} gives $v_{\phi(h)}(T)=p$, and hence $r_h(T)_j=z(h)_j-(\lambda-1)p_j>0$.

Along the hunger orbit starting from $\phi(h)$, the vector $r_h(t)$ satisfies $r_h(t+1)=r_h(t)+e_{i_t(\phi(h))}$. Since $r_h(0)_j\le0<r_h(T)_j$, the $j$th coordinate becomes positive at some step $t<T$. At the first such step, the only coordinate that changes is $i_t(\phi(h))$, so $i_t(\phi(h))=j$. Hence $r_h(t+1)_{i_t(\phi(h))}>0$, and therefore $0\le\widehat{\tau}(h)<T$. This proves~\ref{item:two-orbit-quota-time}.

Write $\widehat{\tau}=\widehat{\tau}(h)$, put $y=\mathcal R(h)$, and let $s=\iota(y)$. By the minimality of $\widehat{\tau}$, we have $r_h(\widehat{\tau})\le0$. Moreover, $r_h(\widehat{\tau}+1)_s=r_h(\widehat{\tau})_s+1>0$. Since $r_h(\widehat{\tau})$ is integral, it follows that $r_h(\widehat{\tau})_s=0$.

Using $\phi(h)=h+z(h)H$ and $pH=0$, we obtain $y=\phi(h)+v_{\phi(h)}(\widehat{\tau})H =h+r_h(\widehat{\tau})H$. Moreover, $y$ lies on the same one of $O$ and $O'$ as $\phi(h)$, whereas $h$ lies on the other. Thus $y\ne h$. It follows that $r_h(\widehat{\tau})H=y-h\ne\mathbf 0$, and hence $r_h(\widehat{\tau})\ne\mathbf 0$. This proves~\ref{item:two-orbit-quota-displacement}.

Finally, by $s=\iota(y)$, we have $M(y)=y_s$. Since $r_h(\widehat{\tau})_s=0$, we have
\[
 y_s
 =h_s+\sum_{u\ne s}r_h(\widehat{\tau})_uP_{us}
 \le h_s\le M(h),
\]
where the first inequality follows from $r_h(\widehat{\tau})_u\le0$ and $P_{us}\ge0$. This proves~\ref{item:two-orbit-quota-bound}.
\end{proof}

\begin{lemma}
\label{lem:two-orbit-minima}
We have $m_O=m_{O'}$. If $m$ denotes their common value, then $\mathcal R(C_O)\subseteq C_{O'}$ and $\mathcal R(C_{O'})\subseteq C_O$. Moreover, if $h\in C_O\cup C_{O'}$ and $s=\iota(\mathcal R(h))$, then $h_s=\mathcal R(h)_s=m$ and $\iota(h)\preceq\iota(\mathcal R(h))$. Consequently, $\mathcal R$ restricts to a self-map of the finite nonempty set $C_O\cup C_{O'}$.
\end{lemma}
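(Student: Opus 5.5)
The plan is to derive everything from the inequality in Lemma~\ref{lem:two-orbit-quota-properties}\ref{item:two-orbit-quota-bound}, which says $M(\mathcal R(h))\le M(h)$ for every $h\in O\cup O'$, where $\mathcal R(h)$ lies on the other orbit.

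First I will show $m_O=m_{O'}$. Take $h\in C_O$. Then $\mathcal R(h)\in O'$ and
\[
m_{O'}\le M(\mathcal R(h))\le M(h)=m_O .
\]
The symmetric argument, starting from a point of $C_{O'}$, gives $m_O\le m_{O'}$. Hence the two minima agree; call the common value $m$.

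Next I will show that $\mathcal R$ maps $C_O$ into $C_{O'}$. For $h\in C_O$, the chain above becomes
\[
m\le M(\mathcal R(h))=\mathcal R(h)_s\le h_s\le M(h)=m ,
\]
with $s=\iota(\mathcal R(h))$, using the middle expression of Lemma~\ref{lem:two-orbit-quota-properties}\ref{item:two-orbit-quota-bound}. Every inequality is therefore an equality. This gives $\mathcal R(h)\in C_{O'}$ and $h_s=\mathcal R(h)_s=m$. The case $h\in C_{O'}$ is identical with the roles of the orbits swapped, so $\mathcal R(C_{O'})\subseteq C_O$.

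For the order statement, $h_s=m=M(h)$ means $s$ attains the maximum hunger at $h$. Since $\iota(h)$ is the $\prec$-least maximizer, $\iota(h)\preceq s=\iota(\mathcal R(h))$.

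Finally, $C_O\cup C_{O'}$ is finite and nonempty because the orbits are finite. Part~\ref{item:two-orbit-quota-time} of Lemma~\ref{lem:two-orbit-quota-properties} makes $\mathcal R$ defined everywhere, so $\mathcal R$ restricts to a self-map of this set.

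I expect no real obstacle. The only point needing care is that Lemma~\ref{lem:two-orbit-quota-properties}\ref{item:two-orbit-quota-bound} supplies the equality $M(\mathcal R(h))=\mathcal R(h)_s$ in the form needed to force $h_s=m$. The argument mirrors Lemma~\ref{lem:quota-crossing}\ref{item:quota-monotonicity}.
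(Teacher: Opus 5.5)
Your proposal is correct and follows essentially the same route as the paper. The two-sided use of Lemma~\ref{lem:two-orbit-quota-properties}\ref{item:two-orbit-quota-bound} gives $m_O=m_{O'}$. The chain $m\le M(\mathcal R(h))=\mathcal R(h)_s\le h_s\le M(h)=m$ then collapses to equalities, which gives $\mathcal R(h)$ in the other minimal set, $h_s=\mathcal R(h)_s=m$, and $\iota(h)\preceq s$. You are also right that the final self-map claim follows from part~\ref{item:two-orbit-quota-time} of that lemma together with the finiteness of the orbits.
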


\begin{proof}
Choose $h\in C_O$. Since $\mathcal R(h)\in O'$,
Lemma~\ref{lem:two-orbit-quota-properties}%
\ref{item:two-orbit-quota-bound} gives $m_{O'}\le M(\mathcal R(h))\le M(h)=m_O$. Hence $m_{O'}\le m_O$. Applying the same argument to a point of $C_{O'}$ gives $m_O\le m_{O'}$. Therefore $m_O=m_{O'}=:m$.

Now let $h\in C_O\cup C_{O'}$, put $y=\mathcal R(h)$, and let $s=\iota(y)$. Since $y$ lies on the orbit among $O$ and $O'$ that does not contain $h$, and the minimum of $M$ on each orbit is $m$,
Lemma~\ref{lem:two-orbit-quota-properties}%
\ref{item:two-orbit-quota-bound} gives $m\le M(y)=y_s\le h_s\le M(h)=m$. Thus all four quantities are equal. In particular, $\mathcal R(h)$ belongs to $C_{O'}$ when $h\in C_O$, and to $C_O$ when $h\in C_{O'}$.

The equality $h_s=M(h)$ shows that $s$ is a state of maximal hunger at $h$. Since $\iota(h)$ is the first maximal state in the fixed tie-breaking order, $\iota(h)\preceq s=\iota(\mathcal R(h))$.
\end{proof}

We can now rule out the two recurrent orbits fixed at the beginning of this subsection.
\begin{lemma}
\label{lem:crossing}
The distinct recurrent orbits $O$ and $O'$ cannot meet the same coset of $L$ in $Z$.
\end{lemma}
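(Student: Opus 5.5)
We argue by contradiction, mirroring the proof of Theorem~\ref{thm:conj74} but with the two-orbit quota-return map $\mathcal R=\mathcal R_{O,O'}$ in place of $\mathcal R_O$. Assume the distinct recurrent orbits $O$ and $O'$ meet a common coset of $L$. By Lemma~\ref{lem:two-orbit-minima}, $\mathcal R$ restricts to a self-map of the finite nonempty set $C_O\cup C_{O'}$. It sends points of $C_O$ into $C_{O'}$ and conversely, and it satisfies $\iota(h)\preceq\iota(\mathcal R(h))$.

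Iterating $\mathcal R$ from any point of $C_O\cup C_{O'}$ eventually produces a cycle $h^0,h^1,\ldots,h^{r-1}$ with $\mathcal R(h^j)=h^{j+1}$, indices read modulo $r$. Since $\mathcal R$ alternates between the two orbits, $r$ is even and in particular $r\ge2$, though this is not essential. The monotonicity $\iota(h^j)\preceq\iota(h^{j+1})$ around a cycle forces all firing states $\iota(h^j)$ to coincide; call the common value $s$.

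Next set $R_j=r_{h^j}(\widehat\tau(h^j))$ and $R=\sum_{j=0}^{r-1}R_j$. By Lemma~\ref{lem:two-orbit-quota-properties}\ref{item:two-orbit-quota-displacement}, we have $h^{j+1}=h^j+R_jH$, where each $R_j\in\Z_{\le0}^n\setminus\{\mathbf 0\}$ and $(R_j)_{\iota(h^{j+1})}=0$, that is, $(R_j)_s=0$. Summing around the cycle telescopes to $RH=0$. Moreover $R\le\mathbf 0$, $R\ne\mathbf 0$ (a sum of nonzero nonpositive vectors), and $R_s=0$.

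The left kernel of $H$ is $\R\pi$ with $\pi$ strictly positive. A nonzero kernel element must therefore have all coordinates nonzero of one sign, which contradicts $R_s=0$. This is the required contradiction.

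Essentially all the work has already been done in Lemmas~\ref{lem:two-orbit-quota-properties} and~\ref{lem:two-orbit-minima}. The only point needing care is that $(R_j)_s=0$ uses $s=\iota(\mathcal R(h^j))=\iota(h^{j+1})$, which is exactly how part~\ref{item:two-orbit-quota-displacement} is phrased. The step I would double-check is that $\mathcal R$ really is defined and lands in $C_O\cup C_{O'}$ at every point of that set, so that the orbit of iteration stays in a finite set and a cycle exists. This is supplied by Lemma~\ref{lem:two-orbit-quota-properties}\ref{item:two-orbit-quota-time} together with Lemma~\ref{lem:two-orbit-minima}.
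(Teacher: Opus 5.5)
Your proof is correct and follows the paper's argument essentially step for step. You take a cycle of the two-orbit quota-return map on $C_O\cup C_{O'}$, note that the firing states are constant along it, and show that the summed displacement $R$ satisfies $RH=0$, $R\le\mathbf 0$, $R\ne\mathbf 0$, and $R_s=0$, contradicting the strict positivity of $\pi$; your parity remark on $r$ is valid (distinct periodic orbits are disjoint) but, as you say, not needed.
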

\begin{proof}
By Lemma~\ref{lem:two-orbit-minima}, $\mathcal R$ is a self-map of the finite nonempty set $C_O\cup C_{O'}$. Choose a cycle of length $\ell$ and denote its points cyclically by $h^j$, so that $\mathcal R(h^j)=h^{j+1}$. For every $j$ modulo $\ell$, Lemma~\ref{lem:two-orbit-minima} gives $\iota(h^j)\preceq\iota(h^{j+1})$. A weakly increasing cycle in a total order is constant, so all these selected states are equal; denote their common value by $s$.

Set $R=\sum_{j=0}^{\ell-1} r_{h^j}\bigl(\widehat{\tau}(h^j)\bigr)$. By
Lemma~\ref{lem:two-orbit-quota-properties}%
\ref{item:two-orbit-quota-displacement}, for each $j$ we have $h^{j+1}=h^j+r_{h^j}\bigl(\widehat{\tau}(h^j)\bigr)H$. Summing these identities over $0\le j<\ell$ gives $RH=0$.

For every $j$, Lemma~\ref{lem:two-orbit-quota-properties}%
\ref{item:two-orbit-quota-displacement} also gives $r_{h^j}\bigl(\widehat{\tau}(h^j)\bigr)\le0$ and $r_{h^j}\bigl(\widehat{\tau}(h^j)\bigr)\ne\mathbf 0$. Moreover, the $s$th coordinate of this vector is zero because $s=\iota(h^{j+1})=\iota(\mathcal R(h^j))$. It follows that $R\le0$, $R\ne\mathbf 0$, and $R_s=0$.

The left kernel of $H$ is spanned by $\pi$. Since the chain is irreducible, every coordinate of $\pi$ is positive. Thus every nonzero vector in the left kernel of $H$ has all coordinates nonzero. This contradicts $RH=0$, $R\ne\mathbf 0$, and $R_s=0$.
\end{proof}

For $u\in L$, write $B+u=\{b+u:b\in B\}$.

\begin{theorem}[Conjecture~7.5 of Li--Propp~\cite{LiPropp}]
\label{thm:conj75}
Let $(V,P,F)$ be an irreducible rational hunger game. Then every coset of $L$ in $Z$ contains exactly one point of $B$. Equivalently,
\begin{equation}
 Z=\bigsqcup_{u\in L}(B+u).
 \label{eq:tiling}
\end{equation}
\end{theorem}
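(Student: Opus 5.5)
The plan is to split the statement into existence and uniqueness of a recurrent point in each coset, and then read off the tiling identity \eqref{eq:tiling} as a reformulation.

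\textbf{Existence.} Proposition~\ref{prop:coverage} already gives that every coset $h+L$ with $h\in Z$ contains a recurrent hunger vector.

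\textbf{Uniqueness.} Suppose $b,b'\in B$ lie in the same $L$-coset with $b\ne b'$. Let $O$ and $O'$ be the recurrent orbits through $b$ and $b'$. There are two cases.
\begin{enumerate}[label=\textup{(\alph*)},leftmargin=*]
\item If $O\ne O'$, these are two distinct recurrent orbits meeting a common coset, which Lemma~\ref{lem:crossing} forbids.
\item If $O=O'$, then $b'=F^t(b)$ for some $0<t<T$, by Theorem~\ref{thm:conj74}. Corollary~\ref{cor:orbit-cosets} shows that the $T$ points of a single orbit lie in $T$ distinct cosets, since $[H_1]$ has order $T$ by Lemma~\ref{lem:phase-order}. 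Hence $[b']=[b]+t[H_1]\ne[b]$, a contradiction.
\end{enumerate}
So each coset contains exactly one point of $B$.

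\textbf{Tiling.} For the equivalence with \eqref{eq:tiling}, first take any $h\in Z$. If $b$ is the unique point of $B\cap(h+L)$, then $h=b+u$ with $u=h-b\in L$, so the translates $B+u$ cover $Z$. For disjointness, suppose $b+u=b'+u'$ with $b,b'\in B$ and $u,u'\in L$. Then $b$ and $b'$ lie in the same coset, so $b=b'$ and hence $u=u'$. The converse runs the same way: a disjoint covering of $Z$ by the translates $B+u$, $u\in L$, says exactly that each coset $h+L$ meets $B$ in exactly one point.

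There is no real obstacle left, since the substantive work is in Lemma~\ref{lem:crossing}. The only point needing care is the same-orbit case (b), which relies on Theorem~\ref{thm:conj74} to ensure the orbit has exactly $T$ points. Otherwise an orbit of period $qT$ with $q\ge2$ could meet a coset more than once.
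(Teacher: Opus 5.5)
Your proposal is correct and follows the paper's proof essentially verbatim. Existence comes from Proposition~\ref{prop:coverage}, and uniqueness splits into the same-orbit case (Corollary~\ref{cor:orbit-cosets}) and the distinct-orbit case (Lemma~\ref{lem:crossing}); your explicit check of the equivalence with \eqref{eq:tiling} simply spells out what the paper treats as immediate.
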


\begin{proof}
It suffices to prove that every coset of $L$ in $Z$ contains at least one and at most one point of $B$. The existence assertion is Proposition~\ref{prop:coverage}.

For uniqueness, suppose that $b,b'\in B$ belong to the same $L$-coset. If $b$ and $b'$ lie on the same recurrent orbit, then Corollary~\ref{cor:orbit-cosets} gives $b=b'$. If they lie on distinct recurrent orbits, Lemma~\ref{lem:crossing} gives a contradiction. Thus every $L$-coset contains exactly one point of $B$, which is equivalent to the disjoint-union identity \eqref{eq:tiling}.
\end{proof}


\section{Fixed-Priority Chip-Firing and Primitive Periods}
\label{sec:priority-chip-firing}

We begin with notation for the classical sequential chip-firing model. Throughout this section, let $D$ be a finite strongly connected directed multigraph with vertex set $V$, where $|V|\ge2$; loops and parallel edges are allowed. For $u,v\in V$, let $a_{uv}$ be the number of edges from $u$ to $v$, and let $d^+(u)=\sum_{v\in V}a_{uv}$ be the outdegree of $u$.

A \emph{configuration} is a vector $c\in\Z_{\ge0}^{V}$, where $c_v$ is the number of chips at vertex $v$. A vertex $v\in V$ is \emph{legal} at $c$ if $c_v\ge d^+(v)$. Firing a legal vertex $v$ replaces $c$ by the configuration $c'$ defined by $c'_u=c_u+a_{vu}-\delta_{uv}d^+(v)$ for every $u\in V$. Define the firing matrix $\Delta_D$ by $(\Delta_D)_{uv}=a_{uv}-\delta_{uv}d^+(u)$. Thus firing $v$ replaces $c$ by $c+e_v\Delta_D$. Since $\Delta_D\one^{T}=0$, every firing preserves the total number of chips.

For a finite word $\sigma=v_1\cdots v_m$ on $V$, let $|\sigma|=m$. Define its \emph{firing vector} to be the row vector $\mathbf f(\sigma)=\sum_{r=1}^{m}e_{v_{r}}\in\Z_{\ge0}^{V}$, whose $v$th coordinate is the number of times that $v$ occurs in $\sigma$, as in the firing-vector notation for the hunger game. We say that $\sigma$ is \emph{legal from $c$} if, for every $j\in\{1,\ldots,m\}$, the vertex $v_j$ is legal after $v_1,\ldots,v_{j-1}$ have fired in that order. The configuration obtained after performing $\sigma$ is $c+\mathbf f(\sigma)\Delta_D$. A \emph{legal return word from $c$} is a nonempty word $\sigma$ that is legal from $c$ and satisfies $c+\mathbf f(\sigma)\Delta_D=c$.

Since $D$ is strongly connected, it is well known that there is a unique vector $\rho\in\Z_{>0}^{V}$ satisfying $\rho\Delta_D=0$ and $\gcd_{v\in V}\rho_v=1$. Moreover, $\rho$ generates the integral left kernel of $\Delta_D$ \cite{BjornerLovaszDirected}:
\begin{equation}
 \{z\in\Z^{V}:z\Delta_D=0\}=\Z\rho.
 \label{eq:chip-integer-kernel}
\end{equation}
We call $\rho$ the \emph{primitive period vector} of $D$ and set $\ell_D=\sum_{v\in V}\rho_v$. If $\sigma$ is a legal return word, then $\mathbf f(\sigma)\Delta_D=0$. Since $\sigma$ is nonempty, \eqref{eq:chip-integer-kernel} gives $\mathbf f(\sigma)=q\rho$ for some positive integer $q$. Consequently, $|\sigma|=q\ell_D$.

These are the standard notions of sequential chip-firing. Classical results concern termination and the structure of legal firing sequences \cites{BjornerLovaszShor,BjornerLovaszDirected}; see also \cite{CorryPerkinson}. We now make the firing rule deterministic. Fix a total order $\prec$ on $V$. For a configuration $c$, let $\mathcal L(c)$ denote the set of vertices legal at $c$. The configuration is \emph{stable} if $\mathcal L(c)=\varnothing$. When $\mathcal L(c)\ne\varnothing$, let $\iota_D(c)$ be the $\prec$-least vertex in $\mathcal L(c)$. Define the \emph{fixed-priority map} $\Phi$ by
\begin{equation}
 \Phi(c)=
 \begin{cases}
  c+e_{\iota_D(c)}\Delta_D,&\text{if }\mathcal L(c)\ne\varnothing,\\
  c,&\text{if }\mathcal L(c)=\varnothing.
 \end{cases}
 \label{eq:priority-map}
\end{equation}
Thus, after every firing, the rule examines the same priority order from its beginning and fires the first vertex that is currently legal. This is a ``greedy'' rule: at each step it chooses the highest-priority vertex among those that may legally fire.

Every firing preserves nonnegativity and the total number of chips. For each integer $m\ge0$, the map $\Phi$ therefore preserves the finite set of configurations containing $m$ chips, so every forward orbit is eventually periodic. Strong connectivity and $|V|\ge2$ imply that each vertex has an outgoing edge to another vertex. Hence every legal firing changes the configuration, and the fixed points of $\Phi$ are exactly the stable configurations. A periodic orbit of $\Phi$ is called \emph{active} if it is not a stable fixed point.

In both the hunger game and the present chip-firing model, the firing vector of a periodic orbit belongs to a one-dimensional integer kernel. The kernel condition alone allows an arbitrary positive multiple of its primitive generator. The following theorem shows that the fixed-priority rule, like the hunger rule studied above, forces the primitive multiple.

\begin{theorem}
\label{thm:priority-period}
Every active periodic orbit of $\Phi$ has least period $\ell_D$. During one least period, each vertex $v$ fires exactly $\rho_v$ times.
\end{theorem}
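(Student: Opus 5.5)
The plan is to transplant the quota-return argument from the proof of Theorem~\ref{thm:conj74} to $\Phi$. Let $O$ be an active periodic orbit with least period $N$. No point of $O$ is stable: a stable point is a fixed point of $\Phi$, and its orbit would then be a stable fixed point, which an active orbit is not. Hence $\Phi$ fires a vertex at every point of $O$. The word of one least period is therefore a legal return word, so by \eqref{eq:chip-integer-kernel} its firing vector is $q\rho$ and $N=q\ell_D$ for some positive integer $q$. It remains to exclude $q\ge2$.

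So suppose $q\ge 2$. For $c\in O$, write $\mathbf f_c(t)$ for the firing vector of the first $t$ steps of $\Phi$ from $c$, and define the quota time
\[
\tau(c)=\min\{t\ge0:\ \mathbf f_c(t)_{w}=\rho_{w}\},
\]
where $w$ is the vertex fired at $\Phi^t(c)$. We have $\tau(c)\ge1$ because every $\rho_v>0$. We also have $\tau(c)<\ell_D$: otherwise $\mathbf f_c(\ell_D)\le\rho$ and both vectors have coordinate sum $\ell_D$, so $\mathbf f_c(\ell_D)=\rho$ and $\Phi^{\ell_D}(c)=c$, contradicting $N=q\ell_D>\ell_D$.

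Now set $\mathcal R(c)=\Phi^{\tau(c)}(c)\in O$, put $f=\mathbf f_c(\tau(c))$, and let $s=\iota_D(\mathcal R(c))$. As in Lemma~\ref{lem:quota-crossing}, we have $f\le\rho$, $f\ne\rho$, $f_s=\rho_s$, and $\mathcal R(c)=c+(f-\rho)\Delta_D$. Since the off-diagonal entries of $\Delta_D$ are nonnegative and $(f-\rho)_s=0$,
\[
\mathcal R(c)_s=c_s+\sum_{u\ne s}(f_u-\rho_u)a_{us}\le c_s .
\]
Because $s$ is legal at $\mathcal R(c)$, this gives $c_s\ge\mathcal R(c)_s\ge d^+(s)$, so $s$ is already legal at $c$. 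Therefore $\iota_D(c)\preceq s=\iota_D(\mathcal R(c))$.

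This step replaces the "maximal hunger" comparison of the hunger game, and it is the point I expected to be delicate. In fact it is simpler here: legality is a threshold condition on a single coordinate, so there is no need to restrict to a minimizing set like $C_O$. $\mathcal R$ is a self-map of the whole finite orbit $O$.

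Finally, take a cycle $c^0,\dots,c^{r-1}$ of $\mathcal R$ in $O$. The firing states $\iota_D(c^j)$ weakly increase around the cycle, so they all equal a single vertex $s$. Summing the displacement identities gives $R\Delta_D=0$ with
\[
R=\sum_j\bigl(\mathbf f_{c^j}(\tau(c^j))-\rho\bigr).
\]
Each summand is nonpositive, nonzero, and has $s$-coordinate $0$, so $R\le0$, $R\ne0$ and $R_s=0$. But by \eqref{eq:chip-integer-kernel} and rational scaling, the real left kernel of $\Delta_D$ is $\R\rho$ with $\rho$ strictly positive, so every nonzero kernel vector has all coordinates nonzero. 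This contradiction gives $q=1$: the least period is $\ell_D$, and the firing vector over one period is $\rho$.
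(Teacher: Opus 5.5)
Your proof is correct, but it takes a different route from the paper's.

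\textbf{The paper's route.} The paper does not use the $\rho$-quota time. For each time $t$ it deletes the first $(q-1)\rho_v$ occurrences of every vertex $v$ from the period word $i_t i_{t+1}\cdots i_{t+N-1}$. It then applies the Bj\"orner--Lov\'asz removal lemma (Lemma~\ref{lem:priority-removal}) $q-1$ times. This shows that the first surviving letter $i_{\kappa(t)}$ is legal at $c_t$, which gives $i_t\preceq i_{\kappa(t)}$ for a self-map of positions in $\Z_N$. It closes the cycle by counting rather than by kernel positivity. The relation $r(q-1)=wq$ forces every block to have firing vector exactly $(q-1)\rho$. This produces a return after $(q-1)\ell_D<N$ steps, contradicting minimality.

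\textbf{Your route.} You transplant Lemma~\ref{lem:quota-crossing} directly with the $\rho$-quota. Your key step combines $\mathcal R(c)_s\le c_s$ with $\mathcal R(c)_s\ge d^+(s)$. This is the same one-coordinate inequality $(\gamma\Delta_D)_v\le(\rho\Delta_D)_v=0$ that drives the removal lemma's proof, used inline. You also observe correctly that legality is a threshold on a single coordinate rather than a global maximum. So no minimizing set like $C_O$ is needed, and $\mathcal R$ is a self-map of all of $O$. The contradiction then comes from positivity of the kernel generator, exactly as in Theorem~\ref{thm:conj74}.

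\textbf{What each buys.} Your version is shorter and avoids the removal lemma. It makes the three primitive-period proofs in the paper instances of one template. The paper's version ties the result to the classical removal principle for legal words. It also yields a fact not needed for the theorem: from every orbit point $c_t$ there is a legal return word with firing vector $\rho$, though not necessarily the greedy one.
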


We first record a standard removal principle for legal firing words, due to Björner and Lovász \cite[Lemma~4.3]{BjornerLovaszDirected}.

\begin{lemma}
\label{lem:priority-removal}
Let $\sigma=v_1\cdots v_m$ be legal from $c$, and suppose that $\mathbf f(\sigma)\ge\rho$ coordinatewise. For each $v\in V$, delete the first $\rho_v$ occurrences of $v$ in $\sigma$, and denote the resulting word by $\widetilde\sigma$. Then $\widetilde\sigma$ is legal from $c$ and has firing vector $\mathbf f(\widetilde\sigma)=\mathbf f(\sigma)-\rho$.
\end{lemma}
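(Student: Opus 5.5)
We prove the removal principle by induction on the position in $\widetilde\sigma$, comparing each prefix of $\widetilde\sigma$ with a suitable prefix of $\sigma$. For $k\in\{0,\ldots,m\}$ write $\sigma_{\le k}=v_1\cdots v_k$, and let $\widetilde\sigma_{\le k}$ be the word obtained from $\sigma_{\le k}$ by deleting, for each $v$, those occurrences among the first $\rho_v$ occurrences of $v$ in $\sigma$ that lie in $\sigma_{\le k}$. Thus $\mathbf f(\widetilde\sigma_{\le k})=\mathbf f(\sigma_{\le k})-\min\bigl(\mathbf f(\sigma_{\le k}),\rho\bigr)$, with the minimum taken coordinatewise. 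The firing vector identity $\mathbf f(\widetilde\sigma)=\mathbf f(\sigma)-\rho$ is then immediate at $k=m$ from $\mathbf f(\sigma)\ge\rho$. The prefixes of $\widetilde\sigma$ are exactly the words $\widetilde\sigma_{\le k}$ at the positions $k+1$ where $v_{k+1}$ is a surviving occurrence, so legality reduces to the following claim: whenever the occurrence $v_{k+1}=w$ is not deleted, $w$ is legal at $\widetilde c_k\doteq c+\mathbf f(\widetilde\sigma_{\le k})\Delta_D$.

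To check this, compare $\widetilde c_k$ with $c_k\doteq c+\mathbf f(\sigma_{\le k})\Delta_D$, at which $w$ is legal by hypothesis. Put $d=\min(\mathbf f(\sigma_{\le k}),\rho)$, so that $\widetilde c_k=c_k-d\Delta_D$ and
\[
 (\widetilde c_k)_w=(c_k)_w+d_w\,d^+(w)-\sum_{u\in V}d_u a_{uw}.
\]
Since the occurrence $v_{k+1}$ survives, all $\rho_w$ of the first occurrences of $w$ already lie in $\sigma_{\le k}$, and hence $d_w=\rho_w$. Using $d\le\rho$ and $a_{uw}\ge0$ gives
\[
 (\widetilde c_k)_w\ge(c_k)_w+\rho_w d^+(w)-\sum_{u}\rho_u a_{uw}
 =(c_k)_w-(\rho\Delta_D)_w=(c_k)_w\ge d^+(w).
\]
Here $(\rho\Delta_D)_w=\sum_u\rho_u a_{uw}-\rho_w d^+(w)=0$; loops at $w$ appear on both sides and cancel. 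So $w$ is legal at $\widetilde c_k$, as claimed.

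The main point needing care is the bookkeeping: identifying each prefix of $\widetilde\sigma$ with some $\widetilde\sigma_{\le k}$, and using that survival of the occurrence forces $d_w=\rho_w$ exactly. Once that is in place, the inequality is the single-coordinate estimate above, which uses only $\rho\Delta_D=0$ and the nonnegativity of off-diagonal entries. It does not need nonnegativity of the other coordinates of $\widetilde c_k$, because legality is a condition on one coordinate.
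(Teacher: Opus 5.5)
Your proof is correct and essentially the paper's argument. Your vector $d=\min(\mathbf f(\sigma_{\le k}),\rho)$ is exactly the paper's count $\gamma$ of deleted occurrences preceding the surviving letter, and you apply the same single-coordinate estimate $(d\Delta_D)_w\le(\rho\Delta_D)_w=0$, which follows from $d_w=\rho_w$, $d\le\rho$, and $a_{uw}\ge0$.
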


\begin{proof}
Consider an occurrence of $v$ that remains in $\widetilde\sigma$. We only need to show that $v$ is legal when this occurrence is reached after the preceding letters of $\widetilde\sigma$ have fired. For each $u\in V$, let $\gamma_u$ be the number of deleted occurrences of $u$ preceding this occurrence in $\sigma$. Since the chosen $v$ remains in $\widetilde\sigma$, we have $\gamma_v=\rho_v$ and $0\le \gamma_u\le\rho_u$ for every $u\in V$.

Let $z$ be the configuration immediately before the chosen occurrence of $v$ when $\sigma$ is performed from $c$, and let $\widetilde z$ be the configuration immediately before the same occurrence when $\widetilde\sigma$ is performed from $c$. The prefix leading to $z$ contains the firings in the prefix leading to $\widetilde z$, together with the deleted firings counted by $\gamma$. Hence $\widetilde z=z-\gamma\Delta_D$. Moreover,
\[
 (\gamma\Delta_D)_v
 =(a_{vv}-d^+(v))\rho_v+\sum_{u\ne v}\gamma_ua_{uv}
 \le(\rho\Delta_D)_v=0.
\]
Consequently, $\widetilde z_v\ge z_v$. Since $v$ is legal at $z$, it is also legal at $\widetilde z$.
\end{proof}

The central device in the proof of Theorem~\ref{thm:priority-period} is the fixed-priority analogue of the quota-return map used for the hunger game. Let $O$ be an active periodic orbit of least period $N$ whose firing vector over one period is $q\rho$, where $q\ge2$. Choose $c_0\in O$. For $0\le t<N$, set $c_t=\Phi^t(c_0)$ and $i_t=\iota_D(c_t)$, and extend both sequences to all $t\in\Z$ by $N$-periodicity. For each $t\in\Z$, delete from the indexed period word $i_ti_{t+1}\cdots i_{t+N-1}$ the first $(q-1)\rho_v$ occurrences of each vertex $v$. Applying Lemma~\ref{lem:priority-removal} successively $q-1$ times shows that the remaining word is legal from $c_t$ and has firing vector $\rho$. Let $\kappa(t)$ be the least $r\in\{t,\ldots,t+N-1\}$ for which the occurrence $i_r$ is not deleted. Note that $i_t$ itself must be deleted and the remaining word after deletion is nonempty. Then $t<\kappa(t)<t+N$. Moreover, $\kappa(t+N)=\kappa(t)+N$. Write $\Z_N\doteq\Z/N\Z$. The rule $\mathcal R_O^{\mathrm{fp}}([t])=[\kappa(t)]$ therefore defines a self-map of $\Z_N$, which we call the \emph{priority quota-return map} of $O$.

For every $t\in\Z$, we have
\begin{equation}
\begin{aligned}
 \mathbf f(i_t\cdots i_{\kappa(t)-1})
 \le(q-1)\rho,\quad
 \bigl(\mathbf f(i_t\cdots i_{\kappa(t)-1})\bigr)_{i_{\kappa(t)}}
 =(q-1)\rho_{i_{\kappa(t)}}
 \quad\text{and}\quad i_t\preceq i_{\kappa(t)}.
\end{aligned}
\label{eq:priority-quota-block}
\end{equation}
The first inequality is understood coordinatewise. The first two relations follow from the definition of $\kappa(t)$. After the first $(q-1)\rho_v$ occurrences of each vertex $v$ have been deleted from $i_ti_{t+1}\cdots i_{t+N-1}$, the resulting word is legal from $c_t$ by Lemma~\ref{lem:priority-removal}, and its first letter is $i_{\kappa(t)}$. Thus $i_{\kappa(t)}$ is legal at $c_t$. Since $i_t=\iota_D(c_t)$ is the $\prec$-least legal vertex at $c_t$, we obtain the third inequality.

\begin{proof}[Proof of Theorem~\ref{thm:priority-period}]
Let $O$ be an active periodic orbit of least period $N$. Since the configuration returns after $N$ firings, its firing vector belongs to the integral left kernel of $\Delta_D$. By \eqref{eq:chip-integer-kernel}, this firing vector is $q\rho$ for some positive integer $q$, and hence $N=q\ell_D$. It remains to prove that $q=1$.

Suppose that $q\ge2$, and form the priority quota-return map $\mathcal R_O^{\mathrm{fp}}$. Choose a cycle of this map. Starting from an integer representative $t_0$ of a point on the cycle, define $t_{j+1}=\kappa(t_j)$ until $[t_r]=[t_0]$ after $r$ steps. Since $t_{j+1}>t_j$ for every $j$, we have $t_r=t_0+wN$ for some positive integer $w$. For $0\le j<r$, the last relation in \eqref{eq:priority-quota-block} gives $i_{t_j}\preceq i_{t_{j+1}}$. Since $i_{t_r}=i_{t_0}$ by $N$-periodicity, all the vertices $i_{t_0},i_{t_1},\ldots,i_{t_r}$ are equal; denote their common value by $v$.

For $0\le j<r$, let $\beta_j$ be the firing vector of the word $i_{t_j}i_{t_j+1}\cdots i_{t_{j+1}-1}$. The intervals $[t_j,t_{j+1})$ partition $[t_0,t_0+wN)$, and therefore
\[
 \sum_{j=0}^{r-1}\beta_j=wq\rho.
\]
The first two relations in \eqref{eq:priority-quota-block} give $\beta_j\le(q-1)\rho$ and $(\beta_j)_v=(q-1)\rho_v$ for every $j$. Comparing the $v$th coordinates in the displayed equality yields $r(q-1)=wq$. Hence
\[
 \sum_{j=0}^{r-1}\bigl((q-1)\rho-\beta_j\bigr)
 =\bigl(r(q-1)-wq\bigr)\rho=\mathbf 0.
\]
Every summand is nonnegative, so $\beta_j=(q-1)\rho$ for every $j$. Consequently, $c_{t_{j+1}}=c_{t_j}+\beta_j\Delta_D=c_{t_j}$, while $t_{j+1}-t_j=(q-1)\ell_D$. This is a positive return time smaller than $N=q\ell_D$, contradicting the minimality of $N$. Therefore $q=1$, and the firing vector over one least period is $\rho$.
\end{proof}

The fixed-priority rule has also appeared as a legal toppling procedure in the study of infinite sandpiles: Fey, Levine, and Peres fix an enumeration of the sites of $\mathbb Z^d$ and, whenever unstable sites are present, topple the first unstable site in that enumeration \cite{FeyLevinePeres2010}.

Parallel chip-firing is another deterministic chip-firing dynamics that may be viewed as greedy: in each round, all legal vertices fire simultaneously \cites{BitarGoles1992,Prisner1994}. On a finite graph with a fixed total number of chips, every trajectory is eventually periodic, and its period is measured in synchronous rounds. Bitar conjectured that the period on a connected graph with $n$ vertices is at most $n$. The conjecture holds for trees \cite{BitarGoles1992}, complete graphs \cite{Levine2011}, and complete bipartite graphs \cite{Jiang2010}. However, Kiwi, Ndoundam, Tchuente, and Goles disproved the conjecture in general by constructing a family of graphs whose periods grow faster than any polynomial in the number of vertices \cite{KiwiEtAl1994}.

The periodic behavior of parallel chip-firing has developed into a substantial literature, including work on special graph classes, activity and rotation numbers, devil's-staircase phenomena, periodic firing words, and extremal periods on directed graphs \cites{DallAsta2006,Jiang2010,Levine2011,KissLevineTothmeresz2025, JiangScullyZhang2015,JiLiWang2025}.

Compared with the diverse periodic behavior of parallel chip-firing, Theorem~\ref{thm:priority-period} shows that fixed-priority chip-firing is particularly rigid: every active periodic orbit has the primitive firing vector $\rho$. A problem of particular interest is therefore to characterize, for a fixed strongly connected digraph $D$ and priority order $\prec$, the words, considered up to cyclic rotation, that occur as the firing words of active periodic orbits of $\Phi$. We may also ask how quickly a fixed-priority orbit reaches a stable configuration or enters an active periodic orbit.

\section{Greedy Deficit Rules for Chairman Assignment}
\label{sec:chairman}
The chairman assignment problem arises from the following recurring allocation problem. In its original formulation, several states form a union, state $i$ is assigned a prescribed share $\alpha_i$ of the chairmanships, and in each year the chairman is chosen from exactly one state. The task is to choose an infinite sequence of states, one for each year, such that, for every positive integer $t$, the number of chairmanships assigned to state $i$ during the first $t$ years stays as close as possible to its proportional target $t\alpha_i$. The underlying distribution problem was studied by Niederreiter, Meijer, and Tijdeman \cites{Niederreiter1972,Meijer1973,Tijdeman1973}; Tijdeman later formulated it explicitly as the chairman assignment problem \cite{Tijdeman1980}. Connections with apportionment, sequencing, and weighted chairman assignment are developed in \cites{Li2022,LiuReis2026}.

We now formalize the problem. Fix an integer $d\ge2$ and write $[d]=\{1,\ldots,d\}$. Fix a total order $\prec$ on $[d]$ and a vector $\alpha=(\alpha_1,\ldots,\alpha_d)\in(0,1)^d$ with $\sum_{i=1}^d\alpha_i=1$. An \emph{allocation word} is an infinite word $u=(u_t)_{t\ge0}$ on $[d]$, where $u_t=i$ means that state $i$ is assigned the chairmanship at time $t$. For $i\in[d]$ and $t\ge0$, let
\[
 n_i(t)=\bigl|\{s\in\{0,\ldots,t-1\}:u_s=i\}\bigr|
\]
be the number of chairmanships assigned to state $i$ during the first $t$ time steps, and write $n(t)=(n_1(t),\ldots,n_d(t))$.

After $t$ assignments, the proportional target for state $i$ is $t\alpha_i$. We therefore define its deficit at that time to be $t\alpha_i-n_i(t)$ and set $ x^{(t)}=t\alpha-n(t). $ Thus $x_i^{(t)}>0$ means that state $i$ is behind its proportional target, whereas $x_i^{(t)}<0$ means that it is ahead. Since $\sum_i n_i(t)=t$, every deficit vector belongs to $ Z_d\doteq \left\{x\in\R^d:\sum_{i=1}^d x_i=0\right\}. $ The \emph{discrepancy} of $u$ with respect to $\alpha$ is precisely the largest absolute coordinate attained by these deficit vectors:
\[
 \mathcal D_\alpha(u)
 =\sup_{t\ge1}\|x^{(t)}\|_\infty
 =\sup_{t\ge1}\max_{i\in[d]}
   |t\alpha_i-n_i(t)|.
\]

If $u_t=i$, then $n(t+1)=n(t)+e_i$, and hence
\begin{equation}
 x^{(t+1)}=x^{(t)}+\alpha-e_i.
 \label{eq:chairman-update}
\end{equation}
Thus the entire deficit vector, rather than only its largest coordinate, evolves as the state of a dynamical system. Although an allocation word naturally starts from $x^{(0)}=0$, we consider the same update rule from an arbitrary initial state in $Z_d$. A deterministic rule depending only on the current deficit is specified by a selection function $\iota:Z_d\to[d]$; its associated deficit map sends $x$ to $x+\alpha-e_{\iota(x)}$.

The most direct greedy rule selects an index with largest current deficit. Let $\iota_{\mathrm{LD}}(x)$ be the $\prec$-least index maximizing $x_i$. The associated \emph{largest-deficit map} is
\begin{equation}
 \mathcal F_\alpha(x)
 =x+\alpha-e_{\iota_{\mathrm{LD}}(x)}.
 \label{eq:largest-deficit}
\end{equation}
The map $\mathcal F_\alpha$ is precisely a rank-one hunger map. Let $P_\alpha=\one^{T}\alpha$, so that every row of $P_\alpha$ is equal to $\alpha$. Then $P_\alpha$ is a positive stochastic matrix with stationary distribution $\alpha$, and row $i$ of $P_\alpha-I$ is $\alpha-e_i$. Its hunger map selects the $\prec$-least index maximizing $x_i$ and then adds $\alpha-e_i$. It therefore agrees with $\mathcal F_\alpha$ on $Z_d$. Thus largest-deficit chairman assignment is exactly the hunger game for the rank-one chain $P_\alpha$.

Consequently, if $\alpha$ is rational and $Q$ is the least positive integer such that $Q\alpha$ is integral, then every forward orbit of $\mathcal F_\alpha$ eventually enters a cycle of least period $Q$. During one period, state $i$ is chosen exactly $Q\alpha_i$ times.

Several related works study a similar greedy allocation rule using the \emph{add-then-select} convention defined below \cites{AdlerEtAl2003,CoppersmithEtAl2011,HarrensteinEtAl2022}. Let $\widehat\iota_\alpha(x)$ be the $\prec$-least index maximizing $x_i+\alpha_i$. The corresponding map is
\begin{equation}
 \widehat{\mathcal F}_\alpha(x)
 =x+\alpha-e_{\widehat\iota_\alpha(x)}.
 \label{eq:add-then-select}
\end{equation}
This rule appears as a proportionate-fair scheduler in \cite{AdlerEtAl2003}. If $\alpha$ is rational and $Q$ is as above, \cite[Lemma~4]{AdlerEtAl2003} implies that the add-then-select orbit starting from zero returns to zero after $Q$ steps. This is its least period, since a return after $N$ steps requires $N\alpha$ to be integral. Coppersmith, Nowicki, Paleologo, Tresser, and Wu study an online extension in which the target proportions may change from one step to the next \cite{CoppersmithEtAl2011}. Measuring discrepancy from the cumulative target shares, they prove that the add-then-select rule has sharp worst-case bound $\sum_{k=2}^d \frac{1}{k}$. Moreover, no rule that makes each selection without knowing the future target proportions can guarantee a smaller bound \cite[Theorem~3.1]{CoppersmithEtAl2011}. For a fixed target vector $\alpha$, the modified Frege method of Harrenstein, Lackner, and Lackner is precisely the add-then-select map $\widehat{\mathcal F}_\alpha$ defined in \eqref{eq:add-then-select}. They study its quota properties and long-term proportionality \cite[Theorems~2 and~3]{HarrensteinEtAl2022}.

Meijer proved that every target vector $\alpha$ admits an allocation
word $u$ with $\mathcal{D}_\alpha(u)\le 1-\frac{1}{2d-2}$, and that no smaller
bound works for all $\alpha$ \cite{Meijer1973}.
Tijdeman introduced an online allocation rule achieving this bound
\cite[Theorem~1]{Tijdeman1980}. We use the two-parameter formulation of Berth\'e et al.\ \cite{BertheEtAl2024}. Fix parameters $C\ge0$ and $C'\ge1-\frac1d$. For $x\in Z_d$, define the set of eligible states by
\[
 E_{C'}(x)
 =\{i\in[d]:x_i+\alpha_i-1\ge-C'\}.
\]
The condition on $C'$ ensures that $E_{C'}(x)$ is nonempty for every $x\in Z_d$. Moreover, if state $i$ is eligible, then selecting it does not move its deficit below $-C'$. For an eligible state $i$, the quantity $(C-x_i)/\alpha_i$ measures how long its deficit would take to reach the upper level $C$ if it continued to increase at rate $\alpha_i$. Tijdeman's rule selects the eligible state with the smallest such waiting time. Equivalently, with values in $\overline{\R}=\R\cup\{-\infty,+\infty\}$, define the thresholded score
\begin{equation}
 \psi_i^{C,C'}(r)=
 \begin{cases}
  \dfrac{r-C}{\alpha_i},
     & r+\alpha_i-1\ge-C',\\[5pt]
  -\infty,
     & r+\alpha_i-1<-C'.
 \end{cases}
 \label{eq:tijdeman-score}
\end{equation}
Let $\iota_{\mathrm{Tij}}(x)$ be the $\prec$-least index maximizing $\psi_i^{C,C'}(x_i)$. Tijdeman's rule selects $\iota_{\mathrm{Tij}}(x)$ and updates the deficit according to \eqref{eq:chairman-update}.

The largest-deficit rule, the add-then-select rule, and Tijdeman's rule all evolve the deficit vector according to \eqref{eq:chairman-update}. They differ in how the index $i$ is selected. At a state $x$, the largest-deficit rule selects the $\prec$-least index maximizing $x_i$, whereas the add-then-select rule maximizes $x_i+\alpha_i$. Tijdeman's rule selects the $\prec$-least index maximizing $\psi_i^{C,C'}(x_i)$. Each of these dynamics combines the common update \eqref{eq:chairman-update} with a coordinatewise score used to select the next state. This common structure motivates the generalized hunger dynamics introduced in the next section.
\section{Monotone-Score Hunger Games}
\label{sec:monotone-score}

We return to the hunger-game notation of Section~\ref{sec:setup}. Thus $V=\{1,\ldots,n\}$, $P$ is an irreducible rational transition matrix, $\prec$ is the fixed tie-breaking order, and $H=P-I$. We retain the stationary distribution $\pi$, the integer $T$, and the primitive vector $p=T\pi$. In particular, $pH=0$, $\sum_i p_i=T$, and \eqref{eq:integer-hunger-kernel} describes the integral left kernel of $H$.

Recall from \eqref{eq:hunger-map} that, at a hunger vector $h=(h_1,\ldots,h_n)\in\R^n$, the ordinary hunger map selects $\iota(h)$, the $\prec$-least state $i$ maximizing $h_i$, and then adds $H_i$. We now replace the coordinate $h_i$ in this selection rule by a state-dependent score, while leaving the increments $H_i$ unchanged.

Recall that $\overline{\R}=\R\cup\{-\infty,+\infty\}$, and for each $i\in V$ fix a nondecreasing function $\varphi_i:\R\to\overline{\R}$ as its \emph{score function}. For $h\in\R^n$, let $\iota_\varphi(h)$ be the $\prec$-least state maximizing $\varphi_i(h_i)$. Since $V$ is finite, this state is always defined. The associated \emph{monotone-score hunger map} is
\begin{equation}
 F_\varphi(h)=h+H_{\iota_\varphi(h)}.
 \label{eq:monotone-score-map}
\end{equation}
Thus \eqref{eq:monotone-score-map} differs from \eqref{eq:hunger-map} only in the selector. Taking $\varphi_i(u)=u$ for every $i$ recovers the ordinary hunger map $F$. As before, $F_\varphi$ preserves total hunger because $H\one^{T}=0$. Unlike $F$, it need not satisfy $F_\varphi(h+c\one)=F_\varphi(h)+c\one$. We therefore regard $F_\varphi$ as a map on $\R^n$, rather than restricting without loss of generality to $Z$.

We use the notation introduced before \eqref{eq:orbit-formula}, now with respect to $F_\varphi$. For $h\in\R^n$ and $t\ge0$, let $i_t(h)=\iota_\varphi(F_\varphi^t(h))$; thus $i_t(h)$ is the state selected by the monotone-score rule at time $t$. Define $v_h(t)=\sum_{r=0}^{t-1}e_{i_r(h)}$, with $v_h(0)=0$, so that $v_h(t)$ records the first $t$ firings of $F_\varphi$. The corresponding orbit identity is $F_\varphi^t(h)=h+v_h(t)H$. We also use the recurrence terminology from Sections~\ref{sec:introduction} and \ref{sec:setup}, with $F$ replaced by $F_\varphi$. A state $h\in\R^n$ is \emph{recurrent for $F_\varphi$} if $F_\varphi^N(h)=h$ for some positive integer $N$. Its \emph{least period} is the least such $N$, and the corresponding periodic orbit is $\{h,F_\varphi(h),\ldots,F_\varphi^{N-1}(h)\}$. Its firing vector over one period is $v_h(N)$.

\begin{theorem}
\label{thm:monotone-score-period}
Every recurrent orbit of $F_\varphi$ has least period $T$. During one least period, state $i$ is fired exactly $p_i$ times.
\end{theorem}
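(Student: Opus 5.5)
We adapt the quota-return argument from the proof of Theorem~\ref{thm:conj74}, replacing the maximal hunger $M(h)$ by the maximal score. Let $h$ be recurrent for $F_\varphi$ with least period $N$. The argument of Lemma~\ref{lem:period-vector} applies verbatim: the orbit identity gives $v_h(N)H=0$, so by \eqref{eq:integer-hunger-kernel} we have $v_h(N)=qp$ and $N=qT$ with $q\ge1$. It remains to exclude $q\ge2$.

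Suppose $q\ge2$ and let $O$ be the orbit. For $x\in O$ put $S(x)=\max_i\varphi_i(x_i)\in\overline{\R}$. Set $m_O=\min_{x\in O}S(x)$, which exists because $O$ is finite, and $C_O=\{x\in O:S(x)=m_O\}$. Define the $p$-quota time $\tau(x)$ exactly as in \eqref{eq:quota-time}, using the firing indices of $F_\varphi$, and set $\mathcal R_O(x)=F_\varphi^{\tau(x)}(x)$.

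Lemma~\ref{lem:quota-crossing}\ref{item:quota-time} and \ref{item:quota-displacement} carry over unchanged, since their proofs use only $v_x(T)\le p$, $\sum_i p_i=T$ and $pH=0$. Thus $1\le\tau(x)<T$, and
\[
\mathcal R_O(x)=x+(v_x(\tau(x))-p)H,
\]
where the displacement vector is $\le\mathbf 0$, is nonzero, and vanishes at $s=\iota_\varphi(\mathcal R_O(x))$.

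The key step replaces part \ref{item:quota-monotonicity} of Lemma~\ref{lem:quota-crossing}. Write $y=\mathcal R_O(x)$. Since $P_{us}\ge0$ and the displacement vanishes at $s$, we get $y_s\le x_s$, exactly as before. Because $\varphi_s$ is nondecreasing,
\[
m_O\le S(y)=\varphi_s(y_s)\le\varphi_s(x_s)\le S(x)=m_O.
\]
Equality holds throughout, so $y\in C_O$. Moreover $\varphi_s(x_s)=S(x)$, so $s$ attains the maximal score at $x$, and the tie-breaking rule gives $\iota_\varphi(x)\preceq s=\iota_\varphi(y)$.

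Monotonicity is exactly what this step needs: we never need $y_s=x_s$, only that the score cannot increase, which is why the argument survives non-injective scores and the values $\pm\infty$. Comparisons in $\overline{\R}$ remain valid, and $m_O$ may be $\pm\infty$ without harm, since all that is used is the chain of inequalities in the totally ordered set $\overline{\R}$.

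The conclusion is then identical to the proof of Theorem~\ref{thm:conj74}. $\mathcal R_O$ is a self-map of the finite nonempty set $C_O$, so it has a cycle. Along this cycle the selected states are weakly $\prec$-increasing, hence all equal to a single state $s$. Summing the displacements around the cycle gives a vector $R$ with $RH=0$, $R\le\mathbf 0$, $R\ne\mathbf 0$ and $R_s=0$. This contradicts the fact that the left kernel of $H$ is $\R\pi$ with $\pi>0$. Hence $q=1$, the least period is $T$, and the firing vector over one period is $p$.
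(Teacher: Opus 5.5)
Your proposal is correct and follows the paper's proof essentially step for step. The paper likewise replaces $M(h)$ by the maximal score $M_\varphi(h)=\max_i\varphi_i(h_i)$ and reuses the quota time $\tau$ and the displacement identity. It then uses $h'_s\le h_s$ together with the monotonicity of $\varphi_s$ to show that the quota-return map preserves the minimal-score set with weakly $\prec$-increasing selected states (Lemma~\ref{lem:score-quota-crossing}), and closes with the same cycle-sum contradiction against the positivity of $\pi$.
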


The proof of Lemma~\ref{lem:period-vector} uses only the orbit identity and \eqref{eq:integer-hunger-kernel}; neither step depends on the rule used to select the states $i_t(h)$. Consequently, if a recurrent orbit of $F_\varphi$ has least period $N$, then its firing vector is $qp$ and $N=qT$ for some positive integer $q$. The remaining argument for proving Theorem~\ref{thm:monotone-score-period} is the monotone-score counterpart of Lemma~\ref{lem:quota-crossing} and Theorem~\ref{thm:conj74}. We shall include the full argument for completeness.

For the next lemma, fix a periodic orbit $O$ of $F_\varphi$ whose least period $N$ is greater than $T$. For $h\in O$, put $M_\varphi(h)\doteq\max_{i\in V}\varphi_i(h_i)$, and define $m_O^\varphi\doteq\min_{h\in O}M_\varphi(h)$ and $ C_O^\varphi \doteq\{h\in O:M_\varphi(h)=m_O^\varphi\}. $ For $h\in C_O^\varphi$, define $\tau(h)$ by \eqref{eq:quota-time}, using the present firing vector $v_h(t)$. Whenever $\tau(h)<\infty$, set $\mathcal R_O^\varphi(h)=F_\varphi^{\tau(h)}(h)$. Thus $\mathcal R_O^\varphi$ is the monotone-score analogue of the quota-return map in Definition~\ref{def:quota-return-map}.
\begin{lemma}
\label{lem:score-quota-crossing}
For every $h\in C_O^\varphi$, the following statements hold.
\begin{enumerate}[
label=\textup{(\roman*)}, ref=\textup{(\roman*)}, leftmargin=* ]
\item\label{item:score-quota-time} $1\le\tau(h)<T$, so $\mathcal R_O^\varphi(h)$ is defined.

\item\label{item:score-quota-displacement} Put $h'=\mathcal R_O^\varphi(h)$ and $s=\iota_\varphi(h')$. Then $v_h(\tau(h))-p\in \Z_{\le0}^n\setminus\{\mathbf 0\}$, $\bigl(v_h(\tau(h))-p\bigr)_s=0$, and
 \[
  h'=h+\bigl(v_h(\tau(h))-p\bigr)H.
 \]

\item\label{item:score-quota-monotonicity} With $h'$ and $s$ as in \ref{item:score-quota-displacement}, we have $h'\in C_O^\varphi$, $\varphi_s(h'_s)=\varphi_s(h_s)=m_O^\varphi$, and $\iota_\varphi(h)\preceq\iota_\varphi(h')$.
\end{enumerate}
Consequently, $\mathcal R_O^\varphi:C_O^\varphi\to C_O^\varphi$ is a well-defined self-map.
\end{lemma}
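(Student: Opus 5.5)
The plan is to follow the proof of Lemma~\ref{lem:quota-crossing} line by line. The first two parts rely only on the firing-vector bookkeeping, so they carry over unchanged. The selector enters only in part~\ref{item:score-quota-monotonicity}, where monotonicity of the $\varphi_i$ will replace the fact that $h_s\le M(h)$.

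For~\ref{item:score-quota-time}, I fix $h\in C_O^\varphi$ and suppose $\tau(h)\ge T$. Then none of the first $T$ firings pushes a coordinate of $v_h$ past $p$, so $v_h(T)\le p$. Since $\sum_i(v_h(T))_i=T=\sum_ip_i$, this forces $v_h(T)=p$. The orbit identity $F_\varphi^T(h)=h+v_h(T)H$ together with $pH=0$ then gives $F_\varphi^T(h)=h$, which contradicts $N>T$. Also $\tau(h)\ge1$, because $v_h(0)=\mathbf 0$ and every $p_i\ge1$.

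For~\ref{item:score-quota-displacement}, write $\tau=\tau(h)$. By the definition of $\tau$, we have $v_h(\tau)\le p$ and $(v_h(\tau))_s=p_s$, where $s=i_\tau(h)=\iota_\varphi(h')$. The coordinate sum of $v_h(\tau)-p$ is $\tau-T<0$, so this vector is nonzero. Subtracting $pH=0$ from the orbit identity gives $h'=h+(v_h(\tau)-p)H$.

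Part~\ref{item:score-quota-monotonicity} is the only step needing new input, and I expect it to be the main (though mild) obstacle. Because $(v_h(\tau)-p)_s=0$, all other coordinates of this vector are nonpositive, and the off-diagonal entries of $H$ are nonnegative, we get
\[
 h'_s=h_s+\sum_{u\ne s}\bigl((v_h(\tau))_u-p_u\bigr)P_{us}\le h_s .
\]
Since $\varphi_s$ is nondecreasing, $\varphi_s(h'_s)\le\varphi_s(h_s)$. Using $h'\in O$ and $s=\iota_\varphi(h')$, this yields the chain
\[
 m_O^\varphi\le M_\varphi(h')=\varphi_s(h'_s)\le\varphi_s(h_s)\le M_\varphi(h)=m_O^\varphi .
\]
All the inequalities are therefore equalities, so $h'\in C_O^\varphi$ and $\varphi_s(h'_s)=\varphi_s(h_s)=m_O^\varphi$. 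In particular, $s$ maximizes the score at $h$, and hence $\iota_\varphi(h)\preceq s$.

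One point needs care: scores may take the values $\pm\infty$, so $m_O^\varphi$ may be infinite. The chain above uses only order comparisons in $\overline{\R}$ and never subtracts scores, so it remains valid; I will remark on this explicitly. Finally, parts~\ref{item:score-quota-time} and~\ref{item:score-quota-monotonicity} together show that $\mathcal R_O^\varphi$ maps $C_O^\varphi$ into itself.
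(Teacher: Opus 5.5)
Your proposal is correct and follows the paper's proof essentially step by step: the same contradiction $v_h(T)=p$, hence $F_\varphi^T(h)=h$, for part (i); the same bookkeeping for part (ii); and the same monotonicity chain for part (iii). The only difference is that the paper first notes that the defining set of $\tau(h)$ is nonempty because $q\ge2$. Your assumption $\tau(h)\ge T$ already covers the case $\tau(h)=\infty$, so that preliminary remark is not needed, and your comment about $\pm\infty$ scores is a harmless addition.
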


\begin{proof}
Let $N$ be the least period of $O$. The argument immediately following Theorem~\ref{thm:monotone-score-period} gives $N=qT$ and firing vector $qp$ for some integer $q\ge2$. Thus, during one period starting from any $h\in O$, state $i$ is fired $qp_i>p_i$ times. The defining set for $\tau(h)$ is therefore nonempty.

Suppose that $\tau(h)\ge T$. By the definition of $\tau$, each state $i$ is fired at most $p_i$ times during the first $T$ steps. Hence $v_h(T)\le p$ coordinatewise. Since $\sum_i(v_h(T))_i=T=\sum_i p_i$, we must have $v_h(T)=p$. It follows that $F_\varphi^T(h)=h+pH=h$, contradicting $N>T$. Moreover, $v_h(0)=0$ and $p_{i_0(h)}>0$, so $\tau(h)\ne0$. Therefore $1\le\tau(h)<T$, proving~\ref{item:score-quota-time}.

Write $\tau=\tau(h)$. By definition, $\tau$ is the first time $t$ at which the state $i_t(h)$, selected for the next firing, has already been fired $p_{i_t(h)}$ times. Consequently, $v_h(\tau)\le p$ coordinatewise and, for $s=i_\tau(h)=\iota_\varphi(h')$, we have $(v_h(\tau))_s=p_s$. Since $\sum_i(v_h(\tau))_i=\tau<T=\sum_i p_i$, the vector $v_h(\tau)-p$ is not $\mathbf 0$. Using $pH=0$, we obtain
\[
 h'=h+v_h(\tau)H
   =h+\bigl(v_h(\tau)-p\bigr)H.
\]
This proves~\ref{item:score-quota-displacement}.

Since $\bigl(v_h(\tau)-p\bigr)_s=0$ and the off-diagonal entries of $H$ are nonnegative,
\[
 h'_s
 =h_s+\sum_{u\ne s}
   \bigl((v_h(\tau))_u-p_u\bigr)P_{us}
 \le h_s.
\]
From $h'\in O$ and $s=\iota_\varphi(h')$, we obtain $M_\varphi(h')=\varphi_s(h'_s)$. Since $\varphi_s$ is nondecreasing,
\[
 m_O^\varphi
 \le M_\varphi(h')
 =\varphi_s(h'_s)
 \le\varphi_s(h_s)
 \le M_\varphi(h)
 =m_O^\varphi.
\]
Since the first and last quantities are equal, all inequalities in this chain are equalities. In particular, $M_\varphi(h')=m_O^\varphi$, so $h'\in C_O^\varphi$, and $\varphi_s(h_s)=M_\varphi(h)$, so $s$ also maximizes the score at $h$. Since $\iota_\varphi(h)$ is the $\prec$-least score maximizer at $h$, we have $\iota_\varphi(h)\preceq s=\iota_\varphi(h')$. This proves~\ref{item:score-quota-monotonicity}.
\end{proof}

\begin{proof}[Proof of Theorem~\ref{thm:monotone-score-period}]
Let $O$ be a recurrent orbit of least period $N$. We noted in the paragraph immediately following the statement of Theorem~\ref{thm:monotone-score-period} that $N=qT$ and that the firing vector of $O$ over one period is $qp$, for some positive integer $q$. It remains to prove that $q=1$.

Suppose that $q\ge2$. By Lemma~\ref{lem:score-quota-crossing}\ref{item:score-quota-time} and
Lemma~\ref{lem:score-quota-crossing}%
\ref{item:score-quota-monotonicity}, $\mathcal R_O^\varphi$ is a self-map of the finite nonempty set $C_O^\varphi$. Choose a cycle $h^0,h^1,\ldots,h^{r-1}$ of this map, with $\mathcal R_O^\varphi(h^j)=h^{j+1}$, where the superscripts are read
modulo $r$. Lemma~\ref{lem:score-quota-crossing}%
\ref{item:score-quota-monotonicity} gives $\iota_\varphi(h^j)\preceq\iota_\varphi(h^{j+1})$ for every $j$. These inequalities form a weakly increasing cycle in the total order $\prec$, so all the states $\iota_\varphi(h^j)$ are equal; denote their common value by $s$.

Set $ R=\sum_{j=0}^{r-1} \bigl(v_{h^j}(\tau(h^j))-p\bigr). $
By Lemma~\ref{lem:score-quota-crossing}%
\ref{item:score-quota-displacement}, for every $j$,
\[
 h^{j+1}
 =h^j+\bigl(v_{h^j}(\tau(h^j))-p\bigr)H.
\]
Summing these identities over $0\le j<r$ gives $RH=0$.
Lemma~\ref{lem:score-quota-crossing}%
\ref{item:score-quota-displacement} also shows that, for every $0\le j<r$, the vector $v_{h^j}(\tau(h^j))-p$ is coordinatewise nonpositive and is not $\mathbf 0$. Moreover, $s=\iota_\varphi(h^{j+1})$, so $\bigl(v_{h^j}(\tau(h^j))-p\bigr)_s=0$. Consequently, $R\le0$, $R\ne\mathbf 0$, and $R_s=0$. Since the left kernel of $H$ is spanned by the strictly positive vector $\pi$, the conditions $RH=0$ and $R\ne\mathbf 0$ force $R$ to be a nonzero scalar multiple of $\pi$, contradicting $R_s=0$. This completes the proof.
\end{proof}

Theorem~\ref{thm:monotone-score-period} determines the periods of recurrent states but does not assert that every forward orbit reaches one. To obtain eventual periodicity for every initial state, we impose the following \emph{pairwise non-domination condition}:
\begin{equation}
 \inf_{u\in\R}\varphi_i(u)
 <
 \sup_{u\in\R}\varphi_j(u)
 \qquad\text{for every distinct }i,j\in V.
 \label{eq:score-no-domination}
\end{equation}

\begin{theorem}
\label{thm:monotone-score-eventual}
If the score functions satisfy \eqref{eq:score-no-domination}, then every forward orbit of $F_\varphi$ is bounded and eventually periodic. Its eventual periodic orbit has least period $T$ and firing vector $p$.
\end{theorem}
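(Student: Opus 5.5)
The plan is to prove boundedness by a one-sided argument that uses only \eqref{eq:score-no-domination} and conservation of total hunger. Eventual periodicity should then follow from rationality and a finiteness argument, and the period statement from Theorem~\ref{thm:monotone-score-period}.

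\emph{Step 1: thresholds from non-domination.} For each ordered pair of distinct states $i,j$, condition \eqref{eq:score-no-domination} lets us choose reals $a_{ij}<b_{ij}$, or more simply any $a_{ij},b_{ij}$, with $\varphi_i(a_{ij})<\varphi_j(b_{ij})$. To do this, pick a value strictly between $\inf\varphi_i$ and $\sup\varphi_j$, then realize the inequality at suitable points. Set $a=\min_{i\ne j}a_{ij}$ and $b=\max_{i\ne j}b_{ij}$. Since each $\varphi_k$ is nondecreasing, whenever $h_i\le a$ and $h_j\ge b$ with $i\ne j$, we get $\varphi_i(h_i)\le\varphi_i(a_{ij})<\varphi_j(b_{ij})\le\varphi_j(h_j)$. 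So state $i$ is not selected. In short: a state with hunger at most $a$ can fire only when every other coordinate is $<b$.

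\emph{Step 2: lower bound, then upper bound.} Let $S=\sum_k h_k$, which is conserved. Firing state $i$ changes $h_i$ by $P_{ii}-1\ge-1$, and every other coordinate changes by $P_{ij}\ge0$. Hence a coordinate can decrease only when it fires, and then by at most $1$.

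If state $i$ fires at a point where $h_i\le a$, Step 1 gives $h_j<b$ for all $j\ne i$. It follows that $h_i=S-\sum_{j\ne i}h_j>S-(n-1)b$. So at any firing the fired coordinate is at least $c\doteq\min\{a,\,S-(n-1)b\}$, and after firing it is at least $c-1$.

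By induction along the orbit, every coordinate stays at least $L_0\doteq\min\{\min_k h_k(0),\,c-1\}$. Conservation of $S$ then gives the upper bound $h_j\le S-(n-1)L_0$ for every $j$. Thus the forward orbit is bounded.

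\emph{Step 3: finiteness and period.} Let $D$ be a common denominator of the entries of $P$. Every point of the orbit of $h$ lies in $h+\frac1D\Z^n$, since $F_\varphi^t(h)=h+v_h(t)H$. A bounded subset of this translated lattice is finite, and $F_\varphi$ is a deterministic self-map of it along the orbit. Hence some $F_\varphi^{t_0}(h)$ is recurrent. Theorem~\ref{thm:monotone-score-period} then gives least period $T$ and firing vector $p$.

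I expect the only delicate point to be Step 1. The scores take values in $\overline{\R}$, for instance $-\infty$ below an eligibility threshold as in \eqref{eq:tijdeman-score}, so the strict inequality $\inf\varphi_i<\sup\varphi_j$ must be converted carefully into finite witnesses $a_{ij},b_{ij}$. Once that is done, the remaining steps are routine.
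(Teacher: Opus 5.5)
Your proof is correct and takes essentially the same route as the paper. It picks witnesses $\varphi_i(A_{ij})<\varphi_j(B_{ij})$ from non-domination, bounds the fired coordinate from below via conservation of total hunger, notes that a coordinate drops only when fired and by at most $1$, and finishes with rationality and Theorem~\ref{thm:monotone-score-period}. The differences are cosmetic: the paper uses per-state thresholds $K_i$ rather than your global $a$ and $b$. It also treats the degenerate case $n=1$ separately; there your minimum and maximum over pairs of distinct states are empty, but $H=0$ makes that case trivial.
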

\begin{proof}
If $n=1$, then $P=(1)$ and $H=0$, so every point is fixed. Assume henceforth that $n\ge2$. Fix $h^{(0)}\in\R^n$, write $h^{(t)}=F_\varphi^t(h^{(0)})$, and set $S=\sum_{i\in V}h_i^{(0)}$. Since every row of $H$ has coordinate sum zero, $\sum_{i\in V}h_i^{(t)}=S$ for every $t\ge0$. By \eqref{eq:score-no-domination}, for each ordered pair of distinct states $i,j$, we may choose $A_{ij},B_{ij}\in\R$ such that $\varphi_i(A_{ij})<\varphi_j(B_{ij})$. Define
\[
 K_i=\min\left\{
   \min_{j\ne i}A_{ij},
   \ S-\sum_{j\ne i}B_{ij}
 \right\}.
\]
Fix $t\ge0$ and put $s=\iota_\varphi(h^{(t)})$. We claim that $h_s^{(t)}>K_s$. If $h_s^{(t)}>\min_{j\ne s}A_{sj}$, this follows from the definition of $K_s$. Otherwise, $h_s^{(t)}\le A_{sj}$ for every $j\ne s$. If $h_j^{(t)}\ge B_{sj}$ for some $j\ne s$, then $\varphi_s(h_s^{(t)})\le\varphi_s(A_{sj}) <\varphi_j(B_{sj})\le\varphi_j(h_j^{(t)})$. This is impossible because $s=\iota_\varphi(h^{(t)})$ maximizes the scores at $h^{(t)}$. Hence $h_j^{(t)}<B_{sj}$ for every $j\ne s$. Since the coordinates of $h^{(t)}$ sum to $S$, we obtain $h_s^{(t)}=S-\sum_{j\ne s}h_j^{(t)} >S-\sum_{j\ne s}B_{sj}\ge K_s$, proving the claim.

Set
\[
 b=\min\left\{
   \min_{i\in V}h_i^{(0)},
   \ \min_{i\in V}(K_i-1)
 \right\}.
\]
The coordinate $h_i^{(t)}$ can decrease only when state $i$ is fired. If $i$ is fired at time $t$, then the claim gives $h_i^{(t)}>K_i$, and hence $h_i^{(t+1)}=h_i^{(t)}+P_{ii}-1>K_i-1\ge b$. Firing any other state changes the $i$th coordinate by a nonnegative amount. Since $h_i^{(0)}\ge b$, it follows that $h_i^{(t)}\ge b$ for every $i\in V$ and $t\ge0$. The identity $\sum_i h_i^{(t)}=S$ then gives $h_i^{(t)}\le S-(n-1)b$. Thus every point $h^{(t)}$ of the forward orbit lies in the bounded box $[b,S-(n-1)b]^n$.

Because $P$ is rational, there is a positive integer $r_0$ such that $r_0H$ has integer entries. Since each update adds a row of $H$, every $h^{(t)}$ belongs to $h^{(0)}+r_0^{-1}\Z^n$. The intersection of this discrete set with $[b,S-(n-1)b]^n$ is finite. Hence the forward orbit of $h^{(0)}$ is eventually periodic. Theorem~\ref{thm:monotone-score-period} shows that its eventual periodic orbit has least period $T$ and firing vector $p$.
\end{proof}
\begin{remark}
     Similar arguments show that Theorems~\ref{thm:conj74} and \ref{thm:monotone-score-eventual} remain valid when the rationality assumption on $P$ is replaced by rationality of its stationary distribution $\pi$, with all other assumptions unchanged.
\end{remark}

\begin{example}
\label{ex:score-strictness}
The strict inequality in the hypothesis \eqref{eq:score-no-domination} of Theorem~\ref{thm:monotone-score-eventual} cannot be replaced by a non-strict inequality. Let $V=\{1,2\}$, let $P_{ij}=1/2$ for all $i,j\in V$, and fix the order $1\prec2$. Take the constant score functions $\varphi_1=\varphi_2\equiv0$.
Starting from $h^{(0)}=(0,0)$, the two scores are always equal, so state $1$ fires at every step. Since firing state $1$ adds $(-1/2,1/2)$, we have $h^{(t)}=(-t/2,t/2)$ for every $t\ge0$. This forward orbit is unbounded and is not eventually periodic.
\end{example}

\subsection*{Acknowledgements and the use of AI}
We thank Professor Yaokun Wu for introducing us to this topic in his course in 2024; see a lecture note \url{https://math.sjtu.edu.cn/faculty/ykwu/data/TeachingMaterial/hunger_game.pdf}.  Our investigation of the period conjecture (Theorem~\ref{thm:conj74}) of Li and Propp led us to consider
the quota-return map, which stops just before the first firing
that would exceed a state's quota. In subsequent discussions,
OpenAI's GPT-5.6 Sol observed that this map sends $C_O$ into
itself and completed the proof of Theorem~\ref{thm:conj74}.
We then worked with the model to develop further results
using the same idea. We also used several large language models to help revise
and polish the exposition. The authors independently verified every mathematical argument and take full responsibility for all
claims and proofs in this paper.

\providecommand{\bysame}{\leavevmode\hbox to3em{\hrulefill}\thinspace} \providecommand{\MR}{\relax\ifhmode\unskip\space\fi MR }

\end{document}